 \numberwithin{equation}{section}
\newtheorem{theorem}{Theorem}[section]
\newtheorem{lemma}{Lemma}[section]
\newtheorem{corollary}{Corollary}[section]
\newtheorem{proposition}{Proposition}[section]
\theoremstyle{definition}
\theoremstyle{remark} \theoremstyle{example}
\begin{document}\numberwithin{equation}{section}
\title[On Einstein Matsumoto metrics]{On Einstein Matsumoto metrics}
\author{Yi-Bing Shen and Xiaoling Zhang} \thanks{Supported partially by NNSFC(No. 11171297)}.

 \begin{abstract}
In this paper, the necessary and sufficient conditions for Matsumoto metrics $F=\frac{\alpha^2}{\alpha-\beta}$ to be Einstein are given. It is shown that if the length of $\beta$ with respect to $\alpha$ is constant, then the  Matsumoto metric $F$ is an Einstein metric if and only if $\alpha$ is Ricci-flat and $\beta$ is parallel with respect to $\alpha$. A nontrivial example of Ricci flat Matsumoto metrics is given.
\end{abstract}
\maketitle

\section{Introduction}

Let $F=F(x,y)$ be a Finsler metric on an n-dimensional manifold $M$. $F$ is called an Einstein metric with Einstein scalar $\sigma$ if its Ricci curvature  $Ric$ satisfies
\begin{equation}\label{xa1}
Ric =\sigma  F^2,
\end{equation}
where  $\sigma=\sigma(x)$ is a scalar function on $M$. In particular,  $F$ is said to be Ricci constant (resp. Ricci flat) if $\sigma =$const. (resp. $\sigma=0$) in \eqref{xa1}. (\cite{AZ},\cite{bao1}).

An important class of Finsler metrics is so called $(\alpha,\beta)$-metrics, which are iteratively appearing in physical studies, and are expressed in the form of $F=\alpha\phi(s)$, $s=\frac{\beta}{\alpha}$, where  $\alpha=\sqrt{a_{ij}(x)y^iy^j}$ is a  Riemannian metric and $\beta=b_i(x)y^ i$ is a 1-form. The $(\alpha,\beta)$-metrics with $\phi=1+s$ are called Randers metrics. D. Bao and C. Robles have characterized Einstein Randers metrics, and shown that every Einstein Randers metric is necessarily Ricci constant in dimension $n \geq 3$. When $n=3$, a Randers metric is Einstein if and only if it is of constant flag curvature, see \cite{bao1}.\\

For non-Randers  $(\alpha,\beta)$-metrics $F$ with a polynomial function $\phi(s)$ of degree greater than 2, it was proved that $F$ is an Einstein metric if and only if it is Ricci-flat (\cite{cheng}). An  $(\alpha,\beta)$-metric with $\phi=s^{-1}$ is called a Kropina metric. It was shown that a Kropina metric $F=\frac{\alpha^2}{\beta}$ is an Einstein metric if and only if $h$ is an Einstein metric and $W$ a unit Killing form with respect to $h$, where $(h,W)$ is the navigation data of $F$ (\cite{zhang}).\\

The Matsumoto metric is an interesting $(\alpha,\beta)$-metric with $\phi=1/(1-s)$,  introduced by using gradient of slope, speed and gravity in \cite{mat}. This metric formulates the model of a Finsler space. Many authors  (\cite{aik,mat,park}, etc) have studied this metric by different perspectives.\\

The present paper is devoted to study Einstein Matsumoto metrics, and main results are as follows.

\begin{theorem}\label{xxa1}
Let $F=\frac{\alpha^2}{\alpha-\beta}$ be a non-Riemann Matsumoto metric on an n-dimensional manifold $M$, $n\geq2$. Then $F$ is an Einstein metric if and only if the followings hold\\
1) $\alpha$ is an Einstein metric, i.e., $\overline{Ric}=\lambda\alpha^2$,\\
2) $\beta$ is a conformal form with respect to $\alpha$, i.e., $r_{00}=c\alpha^2$,\\
3) \begin{equation}\label{xaa1}0=\lambda\alpha^2+2T^k_{\,\,\,|k}-y^jT^k_{\,\,\,.\,k|j}+2T^jT^k_{\,\,\,.\,j\,.\,k}-T^k_{\,\,\,.\,j}T^j_{\,\,\,.\,k}
-\sigma(x)\frac{\alpha^4}{(\alpha-\beta)^2}, \end{equation}
where $\overline{Ric}$ denotes the Ricci curvature of $\alpha$, $\lambda=\lambda(x),c=c(x)$ are functions on $M$, $$T^i=
-\frac{\alpha^2}{2\beta-\alpha}s^i_{\,\,0}
-\frac{3\alpha^3}{3\beta-(2b^2+1)\alpha}(\frac{2
}{2\beta-\alpha}s_0+c)b^i
+\frac{\alpha(4\beta-\alpha)}{2(3\beta-(2b^2+1)\alpha)}(\frac{2
}{2\beta-\alpha}s_0+c)y^i,$$  and $"|"$ and  $"."$  denote the horizontal covariant derivative and vertical covariant derivative with respect to $\alpha$, respectively.
\end{theorem}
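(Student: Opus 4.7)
My plan is to start from the standard decomposition of the spray coefficients for an $(\alpha,\beta)$-metric $F=\alpha\phi(s)$, namely
\[
G^i=\bar G^i+\alpha Q\, s^i_{\,\,0}+\bigl(\Psi b^i+\Theta\alpha^{-1}y^i\bigr)\bigl(-2\alpha Q s_0+r_{00}\bigr),
\]
and specialise it to $\phi(s)=1/(1-s)$. A direct computation gives $Q=1/(1-2s)$ and analogous rational-in-$s$ expressions for $\Theta,\Psi$, whose denominators become $2\beta-\alpha$ and $3\beta-(2b^2+1)\alpha$ after multiplying through by appropriate powers of $\alpha$. Writing $G^i=\bar G^i+T^i$, this recovers the $T^i$ displayed in the statement, provided $r_{00}=c\alpha^2$; without that assumption one has an extra $r_{00}$-dependent summand in $T^i$.

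Next I would invoke the general transformation formula for the Ricci scalar under a spray deformation,
\[
Ric=\overline{Ric}+2T^k_{\,\,\,|k}-y^jT^k_{\,\,\,.\,k|j}+2T^jT^k_{\,\,\,.\,j\,.\,k}-T^k_{\,\,\,.\,j}T^j_{\,\,\,.\,k},
\]
and impose the Einstein condition $Ric=\sigma F^2=\sigma\alpha^4/(\alpha-\beta)^2$. This produces a single master identity whose ingredients are polynomials in $y$ except for isolated occurrences of the irrational quantity $\alpha=\sqrt{a_{ij}y^iy^j}$ coming through the denominators $2\beta-\alpha$ and $3\beta-(2b^2+1)\alpha$ appearing in $T^i$ and in its horizontal and vertical derivatives.

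The decisive step is the rationality analysis. After clearing denominators by the common factor $(\alpha-\beta)^2(2\beta-\alpha)^{\ast}(3\beta-(2b^2+1)\alpha)^{\ast}$, the identity splits into a ``rational part'' (even in $\alpha$, hence polynomial in $y$) and an ``irrational part'' (odd in $\alpha$, so a polynomial multiple of $\alpha$). Each part must vanish separately. Tracking the top-degree terms in $\alpha$ in the irrational component forces the combination $r_{00}-c\alpha^2$ (for some function $c(x)$) to vanish, i.e.\ $\beta$ is conformal with respect to $\alpha$; the rational component then, after using $r_{00}=c\alpha^2$, isolates $\overline{Ric}$ modulo $\alpha^2$ and forces $\overline{Ric}=\lambda\alpha^2$ for some $\lambda(x)$. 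With conditions (1) and (2) secured, the remainder of the master identity is precisely \eqref{xaa1}. The converse direction is obtained by substituting (1) and (2) back into the transformation formula and verifying that \eqref{xaa1} is then equivalent to $Ric=\sigma F^2$.

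The main obstacle I expect is the bookkeeping in the rationality step: the vertical derivatives $T^k_{.\,j}$ and $T^k_{.\,j\,.\,k}$, together with the horizontal derivatives $T^k_{|k}$, produce a proliferation of terms whose denominators are powers of $2\beta-\alpha$ and $3\beta-(2b^2+1)\alpha$, and one must organise these carefully before separating rational and irrational parts. Once the separation is carried out, identifying the leading $\alpha$-power contribution that yields $r_{00}=c\alpha^2$, and then the next contribution that yields $\overline{Ric}=\lambda\alpha^2$, is the genuinely delicate algebraic point; after that, verifying \eqref{xaa1} is a direct substitution.
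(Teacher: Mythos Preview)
Your approach is essentially the paper's: decompose the spray as $G^i=\bar G^i+T^i$, apply the Ricci transformation formula, clear denominators (the paper multiplies by $\alpha^{12}(s-1)^2(2s-1)^4(3s-2b^2-1)^4$) to obtain a polynomial identity $\sum_{m=0}^{14} t_m\alpha^m=0$, and then separate even and odd parts in $\alpha$ (your rational/irrational split is precisely the paper's $y\mapsto -y$ argument).

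One concrete correction: the constraints $r_{00}=c\alpha^2$ and $\overline{Ric}=\lambda\alpha^2$ are read off from the \emph{lowest}-degree terms of the \emph{even} part, not the top-degree terms of the odd part as you suggest. The paper computes $t_0=144(8n-11)\beta^{10}r_{00}^2$; since $\alpha^2$ must divide the even combination $t_0+t_2\alpha^2+\cdots$, it divides $t_0$, and irreducibility of $\alpha^2$ then forces $\alpha^2\mid r_{00}^2$, hence $r_{00}=c\alpha^2$. After substituting this and dividing out the common $\alpha^2$, the new lowest-order coefficient becomes $\bar t_0=1296\,\overline{Ric}\,\beta^{10}$, and the same divisibility argument gives $\overline{Ric}=\lambda\alpha^2$. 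By contrast, the top-degree coefficients $t_{13}$, $t_{14}$ involve only $s^j_{\,\,k}s^k_{\,\,j}$, $s^ks_k$, $\sigma$ and related $s$-quantities, so they contain no information about $r_{00}$ or $\overline{Ric}$. Once you redirect your attention to the bottom of the $\alpha$-expansion, the rest of your outline goes through exactly as in the paper.
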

Notations here can be referred to \eqref{xba1} and \eqref{xba2} below.

{\bf {Remark.}}\, M. Rafie-Rad, etc., also discussed Einstein Matsumoto metrics. Unfortunately, the computation and results in \cite{raf1} are wrong because they neglected $b^2$ in $a_i\, (i=0,\ldots,14)$. Theorem \ref{xxa1} is the corrected version of Theorem 1 in \cite{raf1}.\\

\begin{theorem}\label{xxa2}
Let $F=\frac{\alpha^2}{\alpha-\beta}$ be a non-Riemannian Matsumoto metric on an n-dimensional manifold $M$, $n\geq3$. Suppose the length of $\beta$ with respect to $\alpha$ is constant. Then $F$ is an Einstein metric if and only if $\alpha$ is Ricci-flat and $\beta$ is parallel with respect to $\alpha$. In this case, $F$ is Ricci-flat.
\end{theorem}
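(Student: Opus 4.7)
The plan is to apply Theorem~\ref{xxa1} and then exploit the constancy of $|\beta|_\alpha$ to reduce the auxiliary identity \eqref{xaa1} to a tractable polynomial form. First I will extract information from $|\beta|_\alpha^2 = b^2$ being constant: differentiating yields $b^i b_{i|j}=0$, so writing $b_{i|j}=r_{ij}+s_{ij}$ and using condition $2)$ of Theorem~\ref{xxa1} in the form $r_{ij}=c(x)\,a_{ij}$, I get $s_j:=b^i s_{ij}=-c\,b_j$ and hence $s_0=-c\beta$. This causes the recurring factor $\frac{2s_0}{2\beta-\alpha}+c$ appearing twice in the definition of $T^i$ to collapse to $\frac{-c\alpha}{2\beta-\alpha}$, reducing $T^i$ to a rational function of $\alpha,\beta$ whose numerator is linear in $s^i_{\,0}$, $c\,b^i$, $c\,y^i$, with denominator $(2\beta-\alpha)(3\beta-(2b^2+1)\alpha)$.

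Next I will compute the horizontal and vertical covariant derivatives of $T^i$ required by \eqref{xaa1}, using $b_{i|j}=c\,a_{ij}+s_{ij}$, $(b^2)_{|j}=0$, and the standard identities $\alpha_{|j}=0$, $\alpha_{.i}=y_i/\alpha$, $\beta_{.i}=b_i$, $\beta_{|j}=r_{0j}+s_{0j}$. Substituting into \eqref{xaa1} and clearing denominators by multiplying through by $(\alpha-\beta)^2(2\beta-\alpha)^2(3\beta-(2b^2+1)\alpha)^2$ produces a polynomial identity in $y$. Since $\alpha=\sqrt{a_{ij}y^iy^j}$ is irrational in $y$ whereas $\beta$, $b^i$, $r_{ij}$, $s_{ij}$, $s^i_{\,0}$ and $\overline{Ric}_{ij}$ are rational in $y$, the identity splits cleanly into an $\alpha$-rational part (the terms carrying only even powers of $\alpha$) and an $\alpha$-irrational part (those carrying one extra factor of $\alpha$), each of which must vanish separately.

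The main obstacle will be the third step: teasing out of these two polynomial identities the conclusions $c=0$ and $s^i_{\,0}=0$. I expect to proceed by comparing coefficients of powers of $\beta$, contracting with $b_i$ and $y_i$, and tracing with $a^{ij}$; the dimension hypothesis $n\geq 3$ is expected to enter through a dimensional scalar such as $(n-1)$ or $(n-2)$ becoming nonzero, which allows one to peel off an overall tensor factor. Once $c=0$ and $s_{ij}=0$ are established, $\beta$ is $\alpha$-parallel, $T^i$ vanishes identically, and \eqref{xaa1} collapses to $\lambda(\alpha-\beta)^2=\sigma\alpha^2$. A final separation of $\alpha$-rational and $\alpha$-irrational parts forces $2\lambda\alpha\beta\equiv 0$, hence $\lambda=0$ (since $\beta\not\equiv 0$ as $F$ is non-Riemannian), whence $\sigma=0$ and $F$ is Ricci-flat. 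The converse is immediate: if $\alpha$ is Ricci-flat and $\beta$ is parallel, then $c=0$, $s_{ij}=0$, $T^i=0$, and all three conditions of Theorem~\ref{xxa1} hold with $\lambda=\sigma=0$.
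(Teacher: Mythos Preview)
Your overall strategy---invoke Theorem~\ref{xxa1}, then exploit $b^2=\mathrm{const}$---is correct, and your derivation of $s_j=-c\,b_j$ is exactly right. But you stop one line too early: since $s_{ij}$ is skew-symmetric, $b^js_j=b^ib^js_{ij}=0$, so contracting $s_j=-c\,b_j$ with $b^j$ gives $cb^2=0$, and because $F$ is non-Riemannian $b^2\neq 0$, hence $c=0$ \emph{immediately}. This is precisely how the paper proceeds: once $c=0$ one has $r_{ij}=0$ and $s_i=0$, i.e.\ $\beta$ is a constant Killing form, and the problem reduces to that special case (the paper isolates this as Lemma~\ref{xxe1}). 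You carry $c$ forward into a simplification of $T^i$ and propose to extract $c=0$ from the cleared-denominator form of \eqref{xaa1}; that is a much longer road to a fact you already have in hand.

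The more serious gap is your ``third step''. Even after $c=0$ and $s_i=0$, showing $s_{ij}=0$ and $\lambda=0$ is not just a matter of comparing $\beta$-coefficients and contracting. The paper's argument here hinges on the Ricci identity/second Bianchi identity applied to $b_{j|k|l}-b_{j|l|k}=b^s\bar R_{jskl}$: tracing this yields $s^{l}_{\ k|l}=\lambda b_k$, whence $s^{k}_{\ 0|k}=\lambda\beta$ and $s^{j}_{\ k}s^{k}_{\ j}=-\lambda b^2$. These identities are what make the polynomial system collapse to the simple relations forcing $\lambda=0$ (hence $\sigma=0$ and $s_{ij}=0$). Your plan mentions none of this machinery; without it, the covariant derivatives $s^{k}_{\ 0|k}$ and the quadratic term $s^{j}_{\ k}s^{k}_{\ j}$ remain loose and the coefficient comparison will not close. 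Your final paragraph (once $\beta$ is parallel, \eqref{xaa1} reduces to $\lambda(\alpha-\beta)^2=\sigma\alpha^2$ and separating rational/irrational parts gives $\lambda=\sigma=0$) is fine, but by that point the paper has already obtained $\lambda=0$; the real content lies in getting there.
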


\begin{theorem}\label{xxa3}
Let $F=\frac{\alpha^2}{\alpha-\beta}$ be a non-Riemannian Matsumoto metric on an $n$-dimensional manifold $M$, $n\geq3$. Suppose $\beta^{\sharp}$, which is dual to $\beta$, is a homothetic vector field, i.e., $r_{00}=c\alpha^2$, where $c=constant$. Then $F$ is an Einstein metric if and only if $\alpha$ is Ricci-flat and $\beta$ is parallel with respect to $\alpha$. In this case, $F$ is Ricci-flat.
\end{theorem}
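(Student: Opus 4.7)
The strategy is to reduce Theorem \ref{xxa3} to Theorem \ref{xxa2} by showing that, under the homothetic hypothesis, the length $b := |\beta|_\alpha$ is forced to be constant in the Einstein case. The assumption $r_{00} = c\alpha^2$ with $c$ constant is equivalent, by symmetry of $r_{ij}$, to $r_{ij} = c\,a_{ij}$; hence condition 2) of Theorem \ref{xxa1} is automatic, and the Einstein condition reduces to $\overline{Ric} = \lambda\alpha^2$ together with the scalar identity \eqref{xaa1}, now with $c$ a constant.

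Horizontally differentiating $b^2 = a_{ij}b^ib^j$ with respect to $\alpha$, I compute
\[
(b^2)_{;k} \;=\; 2b^i(r_{ik}+s_{ik}) \;=\; 2cb_k + 2s_k,
\]
where $s_k := b^i s_{ik}$. Thus $b^2$ is constant if and only if $s_k = -cb_k$; contracting this putative identity with $b^k$ and using $b^k s_k = b^k b^i s_{ik} = 0$ (by skew-symmetry of $s_{ik}$) would give $cb^2 = 0$, and since $\beta \not\equiv 0$ (else $F$ would be Riemannian) this forces $c=0$. So the key task is to extract the relation $s_k = -cb_k$ from the Einstein equation \eqref{xaa1}. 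Once achieved, $c=0$, $s_k=0$, $b$ is constant, and Theorem \ref{xxa2} delivers $\alpha$ Ricci-flat and $\beta$ parallel with respect to $\alpha$, whence $F$ is Ricci-flat.

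To derive $s_k = -cb_k$ from \eqref{xaa1}, I would substitute $r_{ij} = c\,a_{ij}$ (with $c$ constant) into the expression for $T^i$ in Theorem \ref{xxa1} and into \eqref{xaa1}, then clear all denominators --- which include the three linear factors $(\alpha-\beta)$, $(2\beta-\alpha)$ and $(3\beta-(2b^2+1)\alpha)$ --- to obtain a polynomial identity in $y$. Because $\alpha = \sqrt{a_{ij}(x)y^iy^j}$ is irrational in $y$, the identity decouples into its $\alpha$-rational and $\alpha$-irrational parts, each of which must vanish. Coefficient comparison in powers of $\beta$, aided by tracing against $a^{ij}$ (which is where the dimension assumption $n\geq 3$ enters), should yield the desired scalar relations among $c$, $b^2$, $s_k$ and $s_{ij}$, in particular $s_k = -cb_k$.

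The main obstacle is this last computational step: the identity after clearing denominators is of high total degree because of the three distinct factors, and unraveling its coefficients is bookkeeping-intensive, closely paralleling the analysis used for Theorem \ref{xxa2} but with the simplifying feature that $c_{;k}=0$ throughout. Once $s_k = -cb_k$ is in hand, the short argument above yields $c=0$ and $b$ constant, and Theorem \ref{xxa2} closes the proof and gives Ricci-flatness of $F$.
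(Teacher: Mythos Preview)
Your overall strategy---clear denominators in \eqref{xaa1}, separate the rational and irrational parts in $\alpha$, and read off relations from the lowest-order coefficients---is exactly what the paper does. However, the relation that actually emerges from the lowest orders is not $s_k=-cb_k$ but rather $fb_k+gs_k=0$ with $g=864c$ and $f=144(8n-11)c^2+1296\lambda-288(8n-14)c^2$, i.e.\ $f$ involves both $c^2$ and the Einstein scalar $\lambda$ of $\alpha$. Contracting with $b^k$ then yields $f=0$ (a relation tying $\lambda$ to $c$), not $c=0$; so your one-line contraction shortcut does not go through as stated. The paper instead splits into the cases $s_0=0$ and $g=0$ (i.e.\ $c=0$); in the latter branch $f=0$ forces $\lambda=0$, and a further pass through the next nonvanishing coefficients ($t_3$ and $t_4$) is needed to conclude $s_0=0$. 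Thus the natural endpoint of the polynomial analysis is $s_i=0$, not $b=\text{const}$, and the paper accordingly reduces to Theorem~\ref{xxe5} (the $s_i=0$ case) rather than to Theorem~\ref{xxa2}; only inside the proof of Theorem~\ref{xxe5} is $c=0$ finally established, after which $b$ is indeed constant. Your plan can be salvaged, but you should expect the extra case split and the appeal to Theorem~\ref{xxe5} rather than a direct landing on Theorem~\ref{xxa2}.
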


For an $(\alpha,\beta)$-metric, the form $\beta$ is said to be Killing (resp. closed) form if $r_{ij}=0$\,\,(resp. $s_{ij}=0$). $\beta$ is said to be a constant Killing form if it is a Killing form and has constant length with respect to $\alpha$, equivalently $r_{ij}=0$ and $s_i=0$.\\

{\bf {Remark.}}\, B. Rezaei, etc., discussed Einstein Matsumoto metrics with constant Killing form in \cite{rez}. Meanwhile, they got wrong results. Theorem \ref{xxa2} and Theorem \ref{xxa3} generalize their study.\\

For the S-curvature with respect to the Busemann-Hausdorff volume form (\cite{CHEN}), we have following
\begin{theorem}\label{xxa4}
Let $F=\frac{\alpha^2}{\alpha-\beta}$ be a non-Riemannian Matsumoto metric on an n-dimensional manifold $M$, $n\geq2$. Then $S$-curvature vanishes if and only if $\beta$ is a constant Killing form.
\end{theorem}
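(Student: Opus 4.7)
The plan is to specialize the general S-curvature formula for $(\alpha,\beta)$-metrics to the Matsumoto case $\phi(s)=1/(1-s)$ and read off the vanishing condition as a polynomial identity in $y$.

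First I would invoke the Cheng--Shen formula for the S-curvature of an $(\alpha,\beta)$-metric $F=\alpha\phi(s)$ with respect to the Busemann--Hausdorff volume:
$$S=\Bigl(2\Psi-\tfrac{f'(b)}{bf(b)}\Bigr)(r_{0}+s_{0})\;-\;\frac{\Phi}{2\alpha\Delta^{2}}\bigl(r_{00}-2\alpha Q s_{0}\bigr),$$
where $Q=\phi'/(\phi-s\phi')$, $\Delta=1+sQ+(b^{2}-s^{2})Q'$, $\Psi=Q'/(2\Delta)$, $\Phi=-(Q-sQ')(n\Delta+1+sQ)-(b^{2}-s^{2})(1+sQ)Q''$, and $f(b)$ is the volume ratio in the Busemann--Hausdorff form. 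Sufficiency is immediate: if $\beta$ is a constant Killing form then $r_{ij}=0$ and $s_{i}=0$, so $r_{0}=s_{0}=r_{00}=0$ and every term on the right vanishes identically.

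For necessity I would compute the Matsumoto ingredients, obtaining $Q=1/(1-2s)$, $\Delta=(1+2b^{2}-3s)/(1-2s)^{2}$, and the analogous rational expressions in $s$ for $\Psi$ and $\Phi$; note that the factor $3\beta-(1+2b^{2})\alpha$ already appearing in Theorem~\ref{xxa1} resurfaces here. For $\phi=1/(1-s)$ the ratio $f(b)$ is an explicit even polynomial in $b$ coming from $\int_{0}^{\pi}\sin^{n-2}t\,(1-b\cos t)^{n}\,dt$, so $2\Psi-f'(b)/(bf(b))$ is a definite rational function of $s$. Substituting everything into $S=0$ and clearing the common denominator (which is a power of $(1-2s)$ times $(1+2b^{2}-3s)^{2}$ times $\alpha$) turns the equation into an identity
$$A(y)+\alpha\, B(y)=0,$$
where $A$ and $B$ are polynomials in $y^{i}$ whose coefficients are polynomials in $b_{i},r_{ij},s_{ij}$. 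Because $\alpha=\sqrt{a_{ij}y^{i}y^{j}}$ is irrational over the field of rational functions in $y$, this splits into the two polynomial identities $A\equiv 0$ and $B\equiv 0$.

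The main obstacle is the last step: extracting the separate conclusions $s_{i}=0$ and $r_{ij}=0$ from these identities. My strategy is to expand in powers of $\beta$ and examine the extremal-order coefficients. The top-order-in-$\beta$ terms of $A$ and $B$ involve $s_{0}$ and $s^{i}{}_{0}$ linearly and, via the irreducibility of $a_{ij}y^{i}y^{j}$ as a polynomial in $y$, should force $s_{0}\equiv 0$ and hence $s_{i}=0$. Substituting $s_{0}=0$ back collapses the identity to a linear equation in $r_{00}$ whose coefficient is a nonzero polynomial in $\alpha$; this forces $r_{00}\equiv 0$, and in particular rules out the possibility $r_{00}=c\alpha^{2}$ with $c\neq 0$ since the extra terms produced by such a $c$ cannot be absorbed by the $b$-dependent constant $f'(b)/(bf(b))$. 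Hence $r_{ij}=0$ and $s_{i}=0$, i.e.\ $\beta$ is a constant Killing form, completing the proof.
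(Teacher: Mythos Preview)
Your overall architecture (specialize the Cheng--Shen $S$-curvature formula, clear denominators, split $A+\alpha B=0$ by irrationality of $\alpha$) is exactly what the paper does. The gap is in your extraction step. After clearing denominators the highest-in-$\beta$ term of the \emph{even} identity is $24(2n+1)\beta^{4}r_{00}$, and that of the \emph{odd} identity is $72\Lambda\,\beta^{4}(r_{0}+s_{0})$; neither isolates $s_{0}$ (and $s^{i}{}_{0}$ does not occur in $S$ at all). So the leading terms do \emph{not} force $s_{0}=0$ first. What they force is $r_{00}=c(x)\alpha^{2}$ (from the even side, by irreducibility of $\alpha^{2}$) and $r_{0}+s_{0}=0$ (from the odd side, since $r_{0}+s_{0}$ is linear in $y$). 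These two together give $c\beta+s_{0}=0$; contracting with $b^{i}$ and using $b^{i}s_{i}=0$ yields $cb^{2}=0$, hence $c=0$ and then $s_{0}=0$, $r_{00}=0$.

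Your attempt to rule out $r_{00}=c\alpha^{2}$ with $c\neq 0$ at the end (``the extra terms produced by such a $c$ cannot be absorbed by the $b$-dependent constant $f'(b)/(bf(b))$'') is also not a proof: once $s_{0}=0$ is assumed, the remaining equation mixes $r_{00}$ and $r_{0}$ with several $s$- and $b$-dependent coefficients, and one still has to show no nontrivial $c$ survives. The clean way out is the contraction argument above, which the paper uses; with that correction your proposal becomes essentially the paper's proof.
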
\par
From above theorems, we can easily get the following
\begin{corollary}\label{xxa5}
Let $F=\frac{\alpha^2}{\alpha-\beta}$ be a non-Riemannian Matsumoto metric on an n-dimensional manifold $M$, $n\geq3$. Suppose $F$ is an Einstein metric. Then $S$-curvature vanishes if and only if $\alpha$ is Ricci-flat and $\beta$ is parallel with respect to $\alpha$. In this case, $F$ is Ricci-flat.
\end{corollary}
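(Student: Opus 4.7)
The plan is to derive Corollary \ref{xxa5} as a direct combination of Theorem \ref{xxa2} and Theorem \ref{xxa4}, with only a small bridging observation needed in between.

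For the easy direction, I would start from the assumption that $\alpha$ is Ricci-flat and $\beta$ is parallel with respect to $\alpha$. Parallelism of $\beta$ gives $b_{i|j}=0$, so both the symmetric part $r_{ij}$ and the antisymmetric part $s_{ij}$ vanish; in particular $\beta$ is a Killing form with $s_i=b^js_{ji}=0$, hence a constant Killing form. Theorem \ref{xxa4} then immediately yields $S=0$.

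For the nontrivial direction, I would assume that $F$ is Einstein with vanishing $S$-curvature. Applying Theorem \ref{xxa4} gives that $\beta$ is a constant Killing form, i.e.\ $r_{ij}=0$ and $s_i=0$. The bridging step is to note that this forces the $\alpha$-length $b^2=a^{ij}b_ib_j$ to be constant: differentiating gives $(b^2)_{|k}=2b^i b_{i|k}=2b^i(r_{ik}+s_{ik})=2(r_k+s_k)$, and both $r_k=b^ir_{ik}$ and $s_k$ vanish under our hypotheses, so $b^2$ is constant on $M$. Once constancy of $\|\beta\|_\alpha$ is established, Theorem \ref{xxa2} applies and yields that $\alpha$ is Ricci-flat and $\beta$ is parallel with respect to $\alpha$, together with the Ricci-flatness of $F$ itself.

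There is essentially no obstacle: the work is already absorbed into Theorems \ref{xxa2} and \ref{xxa4}, and the only point requiring a line of verification is the elementary implication that a constant Killing form has constant $\alpha$-length, which is the hinge that allows Theorem \ref{xxa2} to be invoked. The final sentence of the corollary (that $F$ is Ricci-flat in this case) is inherited verbatim from Theorem \ref{xxa2}, so no additional argument is needed.
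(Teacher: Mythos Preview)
Your proposal is correct and matches the paper's own argument exactly: the paper simply states that Corollary \ref{xxa5} follows directly from Theorem \ref{xxa4} and Theorem \ref{xxa2}, and your bridging observation that a constant Killing form has constant $\alpha$-length (via $(b^2)_{|k}=2(r_k+s_k)=0$) is precisely the implicit link the paper relies on.
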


The content of this paper is arranged as follows. In \S \ref{xxx2} we introduce essential curvatures of Finsler metrics, as well as notations and conventions. And we give the spray coefficients of Matsumoto metrics.  Theorem \ref{xxa1} is proved in \S \ref{xxx3}. In \S \ref{xxx4}, we first give the necessary and sufficient conditions for Matsumoto metrics to be Einstein under the hypothesis condition that $\beta$ is a constant Killing form. By using it, Theorem \ref{xxa2} and Theorem \ref{xxa3} are proved. A nontrivial example of Ricci flat Matsumoto metrics is shown. By the way, we characterize  Matsumoto metrics $F$ with constant Killing form $\beta$, which are of constant flag curvature. In \S \ref{xxxs} we investigate the $S$-curvature of Matsumoto metrics and Theorem \ref{xxa4} is proved. In the last Section \S \ref{xxx6} we list the coefficients appeared in the proof of Theorem \ref{xxa1}.

\vspace{1cm}

 \section{Preliminaries}\label{xxx2}
Let $F$ be a Finsler metric on an $n$-dimensional manifold $M$ and $G^i$ the geodesic coefficients of $F$, which are defined by
\begin{equation*}
G^i:=\frac{1}{4}g^{il}\{[F^2]_{x^ky^l}y^k-[F^2]_{ x^l}\}.
\end{equation*}
For any $x\in M$ and $y\in T_xM \setminus\{0\}$, the Riemann curvature $\textbf{R}_y:= R^i_{\,\,\,k}\frac{\partial}{\partial x^i}\bigotimes dx^k$ is defined by
\begin{equation}\label{xba4}
 R^i_{\,\,\,k}:=2\frac{\partial G^i}{\partial x^k}-\frac{\partial^2 G^i}{\partial x^j \partial y^k}y^j
  +2G^j \frac{\partial^2G^i}{\partial
 y^j \partial y^k}-\frac{\partial G^i}{\partial y^j}\frac{\partial G^j}{\partial
 y^k}.
\end{equation}
Ricci curvature is the trace of the Riemann curvature, which is defined by
\begin{equation}\label{xba5}
Ric:= R^k_{\,\,\, k}.
\end{equation}

A Finsler metric $F$ is called an Einstein metric with Einstein scalar $\sigma$ if
\begin{equation}\label{xb1}
Ric=\sigma F^2,
\end{equation}where  $\sigma=\sigma(x)$ is a scalar function on $M$. In particular,  $F$ is said to be Ricci constant (resp. Ricci flat) if $F$ satisfies \eqref{xb1} where $\sigma =$const. (resp. $\sigma=0$).

By definition, an $(\alpha,\beta)$-metric on $M$ is expressed in the form $F=\alpha\phi(s),  s=\frac{\beta}{\alpha}$,  where $\alpha=\sqrt{a_{ij}(x)y^iy^j}$  is a positive definite Riemannian metric and $\beta=b_i(x)y^ i$ is a 1-form. It is known that $(\alpha,\beta)$-metric with $||\beta_x||_{\alpha}<b_0$ is a Finsler metric if and only if $\phi=\phi(s)$ is a positive smooth function on an open interval $(-b_0,b_0)$ satisfying the following condition (see \cite{CHEN})
\begin{equation*}
\phi(s)-s\phi'(s)+(b^2-s^2)\phi''(s)>0, ~~~~\forall|s|\leq b<b_0.
\end{equation*}\label{zhengding0}

Let
\begin{equation}\begin{aligned}\label{xba1}
r_{ij}=\frac{1}{2}(b_{i|j}+b_{j|i}),~~~~s_{ij}=\frac{1}{2}(b_{i|j}-b_{j|i}),~~~~~~~~~~~~~~~~~~~~~~~~~~~~~~~~
\end{aligned}\end{equation}
where $ "|" $ denotes the horizontal covariant derivative with respect to  $\alpha$.$^{[5]}$  Denote
\begin{equation}\begin{aligned}\label{xba2}
&r^i_{\,\,j}:= a^{ik}r_{kj}, ~~~~~~~~~~~ r_j:=b^ir_{ij}, ~~~~~~~~~~~ r:=r_{ij}b^ib^j=b^jr_j, ~~~~~~~~~~~r^i:=a^{ij}r_j\\
&s^i_{\,\,j}:= a^{ik}s_{kj}, ~~~~~~~~~~~ s_j:=b^is_{ij}, ~~~~~~~~~~~ s^i:=a^{ij}s_j,\\
&r_{i0}:=r_{ij}y^j, ~~~~~~~~~~~ r_{00}:=r_{ij}y^iy^j, ~~~~~~~~~~~  r_0:=r_iy^i, ~~~~~~~~~~~
s_{i0}:=s_{ij}y^j, ~~~~~~~~~~~s^i_{\,\,0}:=s^i_{\,\,j}y^j, ~~~~~~~~~~~  s_0:=s_iy^i,\\
\end{aligned}\end{equation}
where $(a^{ij}):=(a_{ij})^{-1}$ and $b^i:=a^{ij}b_j$.

 Let $G^i$ and $\bar{G^i}$ be the geodesic coefficients of $F$ and $\alpha$, respectively. Then we have the following \par
\begin{lemma}[\cite{li}] \label{xxb1}
For an $(\alpha,\beta)$-metric $F =\alpha\phi(s),$ $s=\frac{\beta}{\alpha}$, the geodesic coefficients $G^i$ are given by
\begin{equation}\label{xb2}
G^i=\bar{G^i} +\alpha Q s^i_{\,\,0} +\Psi(r_{00}-2\alpha
Qs_0)b^i+\frac{1}{\alpha}\Theta(r_{00}-2\alpha Qs_0)y^i,
\end{equation}
where
\begin{equation*}\begin{aligned}
&Q:=\frac{\phi'}{\phi-s\phi'},\\
&\Psi:=\frac{\phi''}{2[\phi-s\phi'+(b^2-s^2)\phi'']},\\
&\Theta:=\frac{\phi\phi'-s(\phi\phi''+\phi'\phi')}{2\phi[\phi-s\phi'+(b^2-s^2)\phi'']}.\\
\end{aligned}\end{equation*}
\end{lemma}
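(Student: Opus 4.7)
The plan is to start directly from the definition
\[
G^i=\tfrac14 g^{il}\bigl\{[F^2]_{x^ky^l}y^k-[F^2]_{x^l}\bigr\},
\]
so two ingredients are needed: an explicit formula for the fundamental tensor $g_{ij}=\tfrac12[F^2]_{y^iy^j}$ and its inverse $g^{ij}$, and the expansion of $[F^2]_{x^k}$ in terms of $\bar G^i$ and the covariant derivatives of $\beta$. First I would differentiate $F^2=\alpha^2\phi(s)^2$ twice in $y$, using $\alpha_{y^i}=y_i/\alpha$ and $s_{y^i}=(b_i-s y_i/\alpha^2)$, to obtain
\[
g_{ij}=\rho\, a_{ij}+\rho_0\, b_i b_j+\rho_1\bigl(b_i \tfrac{y_j}{\alpha}+b_j \tfrac{y_i}{\alpha}\bigr)+\rho_2\,\tfrac{y_iy_j}{\alpha^2},
\]
where $\rho,\rho_0,\rho_1,\rho_2$ are explicit polynomial expressions in $\phi,\phi',\phi''$ and $s$.

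Then I would invert this matrix. Because the perturbation of $\rho\, a_{ij}$ is of rank at most two (only the directions $b^i$ and $y^i$ appear), a Sherman--Morrison-type computation gives $g^{ij}$ as a combination of $a^{ij}, b^ib^j, b^iy^j+b^jy^i, y^iy^j$. The common denominator that appears is $\phi-s\phi'+(b^2-s^2)\phi''$, which coincides with the strong convexity quantity, and this is what ultimately forces the specific denominators in $Q,\Psi,\Theta$. Next I would compute $[F^2]_{x^l}$ and $[F^2]_{x^ky^l}y^k$ using $\alpha^2_{x^l}=(a_{ij})_{x^l}y^iy^j$ and $\beta_{x^l}=b_{i,x^l}y^i$, and then convert ordinary $x$-derivatives into covariant derivatives with respect to $\alpha$. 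The identity $b_{i|j}=r_{ij}+s_{ij}$ lets me replace derivatives of $\beta$ by $r_{ij},s_{ij}$, while the Riemannian piece reorganizes itself into the Christoffel combination $-2\bar G^i$ via a standard calculation.

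Finally I would substitute the explicit $g^{ij}$ into the formula for $G^i$ and collect terms. The combination along $a^{ij}$ produces $\bar G^i+\alpha Q s^i_{\,\,0}$ after writing $s^i_{\,\,0}=a^{ij}s_{j0}$; the combinations along $b^i$ and $y^i$ must then collapse into $\Psi(r_{00}-2\alpha Q s_0)b^i$ and $\alpha^{-1}\Theta(r_{00}-2\alpha Q s_0)y^i$, respectively. The miracle here is that the two non-Riemannian linear forms, $r_{00}$ and $s_0$, appear only in the single combination $r_{00}-2\alpha Q s_0$; this reflects the fact that the non-Riemannian drift of an $(\alpha,\beta)$-geodesic is governed by a single scalar obstruction.

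The main obstacle will be the algebraic inversion of $g_{ij}$ and the subsequent simplification: one must track the polynomial identities in $\phi,\phi',\phi''$ with sufficient care to recognize the stated closed forms for $Q,\Psi,\Theta$, and to verify that the coefficient of $s_0$ indeed combines with $r_{00}$ through the factor $2\alpha Q$. Once that cancellation is identified, the remaining steps are bookkeeping. Alternatively, one could appeal to homogeneity: $G^i$ is positively homogeneous of degree two in $y$, so writing $G^i=\bar G^i+P\,y^i+Q^i$ with $Q^i$ orthogonal to $y$ in a suitable sense reduces the identification of $\Psi,\Theta$ to solving a small linear system, bypassing some of the heavier computation.
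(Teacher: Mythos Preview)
The paper does not prove this lemma at all: it is quoted verbatim from reference~\cite{li} and used as a black box. Your proposal, by contrast, outlines the standard derivation from scratch --- compute $g_{ij}$ as a rank-two perturbation of $\rho\,a_{ij}$, invert it by a Sherman--Morrison argument (which is where the denominator $\phi-s\phi'+(b^2-s^2)\phi''$ enters), rewrite the $x$-derivatives of $F^2$ in terms of $\bar G^i$, $r_{ij}$, $s_{ij}$, and then collect along $a^{ij}$, $b^i$, $y^i$. This is precisely the route taken in the sources the paper cites (e.g.\ \cite{CHEN}, \cite{li}), so your plan is correct and would yield a self-contained proof where the paper simply imports the result. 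One minor slip to fix when you carry it out: $s_{y^i}=b_i/\alpha - s\,y_i/\alpha^2$, not $b_i - s\,y_i/\alpha^2$; the missing $1/\alpha$ on the first term will propagate through the computation of $g_{ij}$ if left uncorrected.
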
\par

From now on, we consider a special kind of $(\alpha,\beta)$-metrics which is called Matsumoto-metrics with the form
\begin{equation*}
F=\alpha
\phi(s),~~~\phi(s):=\frac{1}{1-s},~~~s=\frac{\beta}{\alpha}.
\end{equation*}

Let $b_0$  be the largest number such that for any $s$ with $|s|\leq b<b_0$. From Lemma 3.1 in \cite{li}, we have known that $F$ is a Finsler metric if and only if $b=||\beta_x||_{\alpha}<b_0=\frac{1}{2}$. So we always assume that
$\phi$ satisfies  this condition.

Now we get the spray coefficients of Matsumoto metrics by using Lemma \ref{xxb1}.
\par

\begin{proposition}\label{xxc1}
For the Matsumoto metric $F=\frac{\alpha^2}{\alpha-\beta}$, its geodesic coefficients are
\begin{equation}\label{xc1}\begin{aligned} G^i=&\bar{G}^i
-\frac{\alpha}{2s-1}s^i_{\,\,0} -\frac{1}{3s-2b^2-1}(\frac{2\alpha
}{2s-1}s_0+r_{00})b^i\\
 &+\frac{4s-1}{2(3s-2b^2-1)}(\frac{2\alpha
}{2s-1}s_0+r_{00})\frac{y^i}{\alpha}.\\
\end{aligned}\end{equation}\end{proposition}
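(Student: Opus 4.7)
The plan is to specialize Lemma \ref{xxb1} directly to the Matsumoto case $\phi(s)=(1-s)^{-1}$. First I would compute the derivatives $\phi'(s)=(1-s)^{-2}$ and $\phi''(s)=2(1-s)^{-3}$, then evaluate the three scalar quantities $Q$, $\Psi$, $\Theta$ that enter the general spray formula \eqref{xb2}. Since the target expression in \eqref{xc1} is already written in terms of $\alpha$ and $s=\beta/\alpha$, only routine algebra is needed to get from the lemma to the stated formula.

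The key intermediate step is the common denominator appearing in both $\Psi$ and $\Theta$. I would clear $(1-s)^3$ in the numerator and compute
\[
\phi-s\phi'+(b^{2}-s^{2})\phi'' \;=\; \frac{(1-2s)(1-s)+2(b^{2}-s^{2})}{(1-s)^{3}} \;=\; \frac{1-3s+2b^{2}}{(1-s)^{3}}.
\]
From this, $Q=\phi'/(\phi-s\phi')=1/(1-2s)$ and $\Psi=1/(1-3s+2b^{2})$ drop out immediately. For $\Theta$ I would compute the numerator $\phi\phi'-s(\phi\phi''+\phi'^{2})$, whose reduction over $(1-s)^{4}$ gives $1-4s$, yielding $\Theta=(1-4s)/[2(1-3s+2b^{2})]$.

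Then I would substitute these three expressions into \eqref{xb2} and collect the terms multiplying $s^{i}_{\,\,0}$, $b^{i}$, and $y^{i}/\alpha$. A final sign rearrangement using $1/(1-2s)=-1/(2s-1)$ and $1/(1-3s+2b^{2})=-1/(3s-2b^{2}-1)$ converts the result into the form displayed in Proposition \ref{xxc1}. The assumption $b<1/2$ noted just before the proposition gives $(2b-1)(b-1)>0$, so on the admissible range $|s|\leq b<1/2$ both denominators $1-2s$ and $1-3s+2b^{2}$ stay strictly positive and no degeneracy appears.

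There is no genuine obstacle here; the argument is a direct substitution and simplification, and the only thing to be careful about is bookkeeping the sign conventions and keeping the factors of $\alpha$ consistent when passing between the variables $s$ and $\beta/\alpha$. The formula will then match \eqref{xc1} term by term.
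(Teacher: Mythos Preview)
Your proposal is correct and follows exactly the approach of the paper, which simply states that for $\phi(s)=\frac{1}{1-s}$ a direct computation yields \eqref{xc1} from \eqref{xb2}. You have merely made explicit the intermediate values of $Q$, $\Psi$, and $\Theta$ that the paper leaves implicit, and your sign and denominator bookkeeping is accurate.
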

\begin{proof}
For $\phi(s)=\frac{1}{1-s}$ and by a direct computation, we can obtain \eqref{xc1} from \eqref{xb2}.
\end{proof}

\vspace{1cm}

\section{Einstein Matsumoto metrics }\label{xxx3}
\vspace{4mm}
By using Proposition \ref{xxc1}, we now prove Theorem \ref{xxa1}.
\vspace{4mm}

{\it Proof of Theorem \ref{xxa1}}\par
 Let\begin{equation*}
G^i=\bar{G}^i +T^i,
\end{equation*}
where\begin{equation*}\begin{aligned} T^i=&
-\frac{\alpha}{2s-1}s^i_{\,\,0} -\frac{1}{3s-2b^2-1}(\frac{2\alpha
}{2s-1}s_0+r_{00})b^i+\frac{4s-1}{2(3s-2b^2-1)}(\frac{2\alpha
}{2s-1}s_0+r_{00})\frac{y^i}{\alpha}.\\
\end{aligned}\end{equation*}
Thus by \eqref{xba4}, \eqref{xba5} and \eqref{xb2}, the Ricci curvature of $F$ is related to the Ricci curvature of $\alpha$ by
\begin{equation}\label{xd0}
Ric=\overline{Ric}+2T^k_{\,\,\,|k}-y^jT^k_{\,\,\,.\,k|j}+2T^jT^k_{\,\,\,.\,j\,.\,k}-T^k_{\,\,\,.\,j}T^j_{\,\,\,.\,k},
\end{equation}
where $\overline{Ric}$ denotes the Ricci curvature of $\alpha$, $"|"$ and  $"."$  denote the horizontal covariant derivative and vertical covariant derivative with respect to $\alpha$, respectively.$^{[5]}$\par
So the necessary and sufficient condition for the Matsumoto metric to be an Einstein metric is
\begin{equation}\label{xd1}\begin{aligned}
0&=Ric-\sigma(x)F^2\\
&=\overline{Ric}+2T^k_{\,\,\,|k}-y^jT^k_{\,\,\,.\,k|j}+2T^jT^k_{\,\,\,.\,j\,.\,k}-T^k_{\,\,\,.\,j}T^j_{\,\,\,.\,k}
-\sigma(x)\frac{\alpha^2}{(1-s)^2}.
\end{aligned}\end{equation}

Multiplying both sides of \eqref{xd1} by $\alpha^{12}(s-1)^2(2s-1)^4(3s-2b^2-1)^4$ and by a quite long computational procedure using Maple program, we obtain
\begin{equation}\label{xd2}
0=\sum^{14}_{m=0}t_m \,\alpha^m,
\end{equation} where $t_m,m=0,1,...,14$ are as follows
\begin{equation*}\begin{cases}
\begin{aligned}
t_0&= 144(8n-11)\beta^{10}r_{00}^2,\\
& \ldots\\
t_{14}&=
-(1+2b^2)^4s^j_{\,\,\,k}s^k_{\,\,\,j}-(1+2b^2)^4\sigma-4(1+2b^2)^3s^ks_k.\\
\end{aligned}\end{cases}
\end{equation*}
All the coefficients of $\alpha$ are tedious, listed in Appendix \S \ref{xxx6}.

If we replace $y$ by $-y$, then $t_{2m}(-y)=t_{2m}(y)$ and $t_{2\bar{m}+1}(-y)=-t_{2\bar{m}+1}(y)$ for $m=0,\ldots,7$ and $\bar{m}=0,\ldots,6$. Hence \eqref{xd2} is equivalent to the following
\begin{equation}\begin{cases}\label{xd3}
\begin{aligned}
&0=t_0+t_2\alpha^2+t_4\alpha^4+t_6\alpha^6+t_8\alpha^8
+t_{10}\alpha^{10}+t_{12}\alpha^{12}+t_{14}\alpha^{14},\\
&0=t_1+t_3\alpha^2+t_5\alpha^4+t_7\alpha^6+t_9\alpha^8+t_{11}\alpha^{10}+t_{13}\alpha^{12}.
\end{aligned}\end{cases}
\end{equation}

From the first equation of \eqref{xd3}, we know that $\alpha^2$ divides $t_0$. Since $\alpha^2$ is an irreducible polynomial in $y$ and $\beta^{10}$ factors into ten linear terms, it must be the case that $\alpha^2$ divides $r_{00}^2$. Thus $r_{00}=c\alpha^2$ for some function $c=c(x)$, i.e., $\beta$ is a conformal form with respect to  $\alpha$. So it is easy to get
\begin{equation}\label{xd4}\begin{cases}
\begin{aligned}
&r_{00}=c\alpha^2,~~~~~~r_{ij}=ca_{ij},~~~~~r_{0j}=cy_{j},~~~r_{i}=cb_{i},~~~~~~r=cb^2,~~~r^i_{\,\,\,j}=c\delta^i_{\,\,\,j},\\
&r_{0k}s^k_{\,\,\,0}=0,~~~~~r_{0k}s^k=cs_0,~~~~r_{0}=c\beta,~~~s^k_{\,\,\,0}r_{k}=cs_{0},\\
&r_{00|k}=c_k\alpha^2,~~~r_{00|0}=c_0\alpha^2,~~~r^k_{\,\,\,k}=nc,~~~r_{0|0}=c_0\beta+c^2\alpha^2,\\
\end{aligned}\end{cases}
\end{equation}
where $y_i:=a_{ij}y^j, ~~c_k:=\frac{\partial c}{\partial x^k}$ and $c_0:= c_ky^k$.\par

Plugging \eqref{xd4} into the first equation of \eqref{xd3} and removing the common factor $\alpha^2$, we obtain
\begin{equation*}
0=\bar{t}_0+\bar{t}_2\alpha^2+...+\bar{t}_{12}\alpha^{12},
\end{equation*}
where
\begin{equation*}\begin{cases}
\begin{aligned}
\bar{t}_0&=1296\overline{Ric}\beta^{10},\\
\bar{t}_2&=72(225+240b^2+48b^4)\beta^8\overline{Ric}
+72(-151-56b^2+63n+24nb^2)\beta^9c_0\\
& -72(142+56b^2-57n-24nb^2)\beta^8s_{0|0} -144(8n-21)\beta^8s_{0}^2
-288(8n-15)\beta^9s_{0}c\\
& -1296\beta^9s^k_{\,\,\,0|k} -144(-21+8n)\beta^{10}c^2.
\end{aligned}\end{cases}
\end{equation*}\par

Due to the irreducibility of $\alpha$, we have $\alpha^2$ divides $\overline{Ric}$, i.e., there exists some function
$\lambda=\lambda(x)$ such that
\begin{equation}\label{xd5}
 \overline{Ric}=\lambda\alpha^2.
\end{equation}
It implies that $\alpha$ is an Einstein metric.

Plugging \eqref{xd4} and \eqref{xd5} into \eqref{xd1} yields \eqref{xaa1}.

Conversely, plugging \eqref{xd4}, \eqref{xd5} and \eqref{xaa1} into \eqref{xd0} yields \eqref{xd1}, which means that $F$ is an Einstein metric. It completes the proof of Theorem \ref{xxa1}.
\qed\\

{\bf {Remark.}}  For Riemann curvature and Ricci curvature  of $(\alpha,\beta)$-metrics, L. Zhou gave some formulas in \cite{zhou}. However, Cheng has corrected some errors of his formulas in \cite{cheng}. To avoid making such mistakes, we use the definitions \eqref{xba4} and \eqref{xba5} of Riemann curvature and Ricci curvatures to compute them.
\vspace{1cm}
\section{The proofs of Theorem \ref{xxa2} and Theorem \ref{xxa3}}\label{xxx4}

\vspace{6mm}
\begin{lemma}\label{xxe1}
Let $F=\frac{\alpha^2}{\alpha-\beta}$ be a non-Riemann Matsumoto metric on an n-dimensional manifold $M$, $n\geq3$. Suppose $\beta$ is a constant Killing form, i.e., $r_{ij}=0,s_i=0$. Then $F$ is an Einstein metric if and only if $\alpha$ is Ricci-flat and $\beta$ is parallel with respect to $\alpha$. In this case, $F$ is Ricci-flat.
\end{lemma}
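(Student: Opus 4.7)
The plan is to specialize the general Einstein criterion of Theorem~\ref{xxa1} to the setting of a constant Killing form. Since $r_{ij}=0$ gives $r_{00}=0$, condition~(2) of Theorem~\ref{xxa1} holds automatically with $c\equiv 0$; since $s_i=0$ gives $s_0=s_iy^i=0$, the vector field $T^i$ appearing in~\eqref{xaa1} collapses from three terms to a single one,
\[
T^i=-\frac{\alpha^2}{2\beta-\alpha}\,s^i_{\;0},
\]
so that \eqref{xaa1} becomes
\[
0=\lambda\alpha^2+2T^k_{\;|k}-y^jT^k_{\;.k|j}+2T^jT^k_{\;.j.k}-T^k_{\;.j}T^j_{\;.k}-\sigma(x)\frac{\alpha^4}{(\alpha-\beta)^2}.
\]

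Next I would carry out the four tensor computations above, repeatedly using the identities forced by the hypothesis: $s^k_{\;k}=0$ (antisymmetry of $s_{ij}$), $s^k_{\;0.k}=0$, $s^k_{\;0}s_k=0$, and $b_{i|j}=s_{ij}$ (since $r_{ij}=0$), so that horizontal differentiation does not produce new $r$-type terms. After these substitutions the Einstein equation becomes a rational identity in $y$ whose denominator divides $(2\beta-\alpha)^2(\alpha-\beta)^2$. Multiplying through to clear denominators yields a polynomial identity of the form
\[
0=\sum_{m=0}^{N}u_m(x,y)\,\alpha^m,
\]
where the coefficients $u_m$ are polynomial in $y$ and involve only $\overline{Ric}$, $\lambda$, $\sigma$, $\beta$, $b^2$, $s^i_{\;0}$, and the horizontal derivatives $s^i_{\;0|j}$.

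The main obstacle, as in the proof of Theorem~\ref{xxa1}, is the subsequent coefficient analysis. Because $\alpha^2$ is irreducible as a polynomial in $y^1,\dots,y^n$ (by positive definiteness of $a_{ij}$) while $\beta$ is linear, the substitution $y\mapsto -y$ separates the identity into its even-in-$\alpha$ and odd-in-$\alpha$ parts, giving two independent polynomial identities. The top-degree terms in $\alpha$ should force $\sigma\equiv 0$; a descending analysis in the power of $\beta$, combined with a trace argument on $s^j_{\;k}s^k_{\;j}$ (this is where $n\geq 3$ is expected to enter, since lower dimensions admit additional algebraic degeneracies for a $b$-orthogonal antisymmetric tensor), should then force $s^i_{\;0}\equiv 0$, i.e.\ $s_{ij}=0$. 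Combined with $r_{ij}=0$, this gives $b_{i|j}=0$, so $\beta$ is parallel with respect to $\alpha$. The residual equation then collapses to $\lambda\alpha^2=0$, so $\overline{Ric}=0$.

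The converse is an immediate verification: if $\beta$ is parallel then $s^i_{\;0}\equiv 0$ gives $T^i\equiv 0$, and~\eqref{xaa1} specializes to $0=\lambda\alpha^2-\sigma\alpha^4/(\alpha-\beta)^2$. Ricci-flatness of $\alpha$ means $\lambda=0$, which in turn forces $\sigma\equiv 0$, establishing both the Einstein property and the final assertion that $F$ is Ricci-flat under the equivalent conditions.
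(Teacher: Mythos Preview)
Your overall framework---specialize Theorem~\ref{xxa1} to $r_{ij}=0$, $s_i=0$ so that $T^i=-\dfrac{\alpha^2}{2\beta-\alpha}s^i_{\,0}$, clear denominators, and split into even and odd powers of $\alpha$---is exactly how the paper proceeds. But the specific mechanism you sketch for the coefficient analysis does not work as stated, and it omits the one ingredient that makes the system close.

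First, your claim that the top-degree coefficient in $\alpha$ forces $\sigma\equiv 0$ is false: after clearing denominators the highest power of $\alpha$ carries $-(s^j_{\,k}s^k_{\,j}+\sigma)$, so one obtains only $\sigma=-s^j_{\,k}s^k_{\,j}$, with the trace $s^j_{\,k}s^k_{\,j}$ still undetermined.

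Second, the cleared identity still contains the horizontal-derivative term $s^k_{\,0|k}$ and the trace $s^j_{\,k}s^k_{\,j}$, and neither a ``descending analysis in powers of $\beta$'' nor a trace argument by itself relates these to $\lambda$. The paper supplies this link via the Ricci identity $b_{j|k|l}-b_{j|l|k}=b^s\bar R_{jskl}$: under $r_{ij}=0$, $s_i=0$ and $\overline{Ric}=\lambda\alpha^2$, tracing it yields $s^l_{\,k|l}=\lambda b_k$, whence
\[
s^k_{\,0|k}=\lambda\beta,\qquad s^j_{\,k}s^k_{\,j}=-\lambda b^2.
\]
Only after substituting these two relations do the even and odd identities reduce to equations in $\lambda$, $\sigma$, $b^2$, $\beta$, $\alpha^2$ and $s_{0k}s^k_{\,0}$ alone; a further irreducibility step then forces $2\lambda\beta^2+s_{0k}s^k_{\,0}=2\lambda b^2\alpha^2$, and the remaining system collapses to $9\lambda b^2=0$. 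Hence $\lambda=0$ first, then $\sigma=0$, and finally $s^j_{\,k}s^k_{\,j}=0$ gives $s_{ij}=0$. The order of implications is the reverse of what you propose, and each step depends on the Ricci-identity relations above; without them your outline does not reach any of the three conclusions.
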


\begin{proof}
If $F$ is an Einstein metric, then \eqref{xd2} holds by Theorem \ref{xxa1}. Removing the common factor $\alpha^2(\alpha-2\beta)(3\beta-2b^2\alpha-\alpha)^4$ from \eqref{xd2}, we obtain
\begin{equation*}
\begin{aligned}
0=&-8\overline{Ric}\beta^5 +28\overline{Ric}\beta^4\alpha
+2(-19\overline{Ric}+4s^k_{\,\,\,0|k}\beta)\beta^3\alpha^2
+(-24s^k_{\,\,\,0|k}\beta+25\overline{Ric}+2s_{0k}s^k_{\,\,\,0})\beta^2\alpha^3\\
&+2(-4\overline{Ric}+13s^k_{\,\,\,0|k}\beta-2s_{0k}s^k_{\,\,\,0}+s^j_{\,\,\,k}s^k_{\,\,\,j}\beta^2+4\sigma\beta^2)\beta\alpha^4\\
&+(\overline{Ric}-12s^k_{\,\,\,0|k}\beta+2s_{0k}s^k_{\,\,\,0}-5s^j_{\,\,\,k}s^k_{\,\,\,j}\beta^2-12\sigma\beta^2)\alpha^5\\
&+2(2s^j_{\,\,\,k}s^k_{\,\,\,j}\beta+s^k_{\,\,\,0|k}+3\sigma\beta)\alpha^6
-(s^j_{\,\,\,k}s^k_{\,\,\,j}+\sigma)\alpha^7.\\
\end{aligned}
\end{equation*}
Obviously, the equation above is equivalent to
\begin{equation}\label{xe1}\begin{cases}
\begin{aligned}
0=&-4\overline{Ric}\beta^5
+(-19\overline{Ric}+4s^k_{\,\,\,0|k}\beta)\beta^3\alpha^2+(-4\overline{Ric}+13s^k_{\,\,\,0|k}\beta-2s_{0k}s^k_{\,\,\,0}
+s^j_{\,\,\,k}s^k_{\,\,\,j}\beta^2+4\sigma\beta^2)\beta\alpha^4\\
&+(2s^j_{\,\,\,k}s^k_{\,\,\,j}\beta+s^k_{\,\,\,0|k}+3\sigma\beta)\alpha^6,\\
0=&28\overline{Ric}\beta^4
+(-24s^k_{\,\,\,0|k}\beta+25\overline{Ric}+2s_{0k}s^k_{\,\,\,0})\beta^2\alpha^2+(\overline{Ric}-12s^k_{\,\,\,0|k}\beta+2s_{0k}s^k_{\,\,\,0}-5s^j_{\,\,\,k}s^k_{\,\,\,j}\beta^2-12\sigma\beta^2)\alpha^4
\\
&-(s^j_{\,\,\,k}s^k_{\,\,\,j}+\sigma)\alpha^6.\\
\end{aligned}\end{cases}
\end{equation}\par
From the first equation of \eqref{xe1}, we have $\overline{Ric}=\lambda\alpha^2$ for some function $\lambda=\lambda(x)$ on $M$. Using the Bianchi identity, i.e., $b_{j|k|l}-b_{j|l|k}=b^s\bar{R}_{jskl}$, we obtain
\begin{equation}\label{xe2}
s^l_{\,\,\,k|l}=\lambda b_k.
\end{equation}
Contracting both sides of \eqref{xe2} with $b^k$ and $y^k$, respectively, we have
\begin{equation}\label{xe3}\begin{cases}\begin{aligned}
s^k_{\,\,\,j}s^j_{\,\,\,k}&=-\lambda b^2,\\
s^k_{\,\,\,0|k}&=\lambda\beta.
\end{aligned}\end{cases}\end{equation}

Substituting \eqref{xe3} into \eqref{xe1} yields
\begin{equation}\label{xe4}
\begin{aligned}
0=& (-4\sigma\beta^2+\lambda
b^2\beta^2+6\lambda\beta^2+2s_{0k}s^k_{\,\,\,0})
+(-3\sigma+3\lambda+2\lambda b^2)\alpha^2,\\
\end{aligned}
\end{equation}
and
\begin{equation}\label{xe5}
\begin{aligned}
0=& 2(2\lambda\beta^2+s_{0k}s^k_{\,\,\,0})\beta^2
+(-12\sigma\beta^2+2s_{0k}s^k_{\,\,\,0}+5\lambda
b^2\beta^2+13\lambda\beta^2)\alpha^2+\{-\sigma+(1+b^2)\lambda\}\alpha^4.\\
\end{aligned}
\end{equation}

$3\times\eqref{xe5}-\eqref{xe4}\times\alpha^2$ yields
\begin{equation}\label{xe6}
\begin{aligned}
0=&6(2\lambda\beta^2+s_{0k}s^k_{\,\,\,0})\beta^2
+(-32\sigma\beta^2+4s_{0k}s^k_{\,\,\,0}+14\lambda
b^2\beta^2+33\lambda \beta^2)\alpha^2+\lambda b^2\alpha^4.\\
\end{aligned}
\end{equation}\par

Since $\alpha^2$ is irreducible polynomial of $y$, we assume that
\begin{equation}\label{xe7}
2\lambda\beta^2+s_{0k}s^k_{\,\,\,0}=h\alpha^2
\end{equation}
holds for some function $h=h(x)$ on $M$. Differentiating both sides of \eqref{xe7} with respect to  $y^iy^j$  yields $4\lambda b_ib_j+s_{ik}s^k_{\,\,\,j}+s_{jk}s^k_{\,\,\,i}=2ha_{ij}$. Then contracting it with $b^ib^j$ gives $h=2\lambda b^2$. Thus $s_{0k}s^k_{\,\,\,0}=h\alpha^2-2\lambda\beta^2=2\lambda(b^2\alpha^2-\beta^2)$. Plugging it into \eqref{xe6}, we get
\begin{equation}\label{xe8}
\begin{aligned}
0=(-32\sigma+28\lambda
b^2+25\lambda )\beta^2 + 9\lambda b^2\alpha^2.\\
\end{aligned}
\end{equation}
Hence \eqref{xe8} is equivalent to
\begin{equation}\bigg\{\label{xe9}
\begin{aligned}
0=&-32\sigma+28\lambda
b^2+25\lambda, \\
0=&9\lambda b^2.\\
\end{aligned}
\end{equation}

From the second equation of \eqref{xe9}, we have $\lambda=0$. Plugging it into the first equation of \eqref{xe9} gives $\sigma=0$, i.e., $F$ is Ricci-flat.

Moreover, substituting $\lambda=0$ into \eqref{xe3} yields $s_{ij}=0$. Together with $r_{ij}=0$, we have $b_{i|j}=0$, i.e., $\beta$ is parallel with respect to $\alpha$.

Converse is obvious. It completes the proof of Lemma \ref{xxe1}.
\end{proof}

\vspace{6mm}

It is found that if $\beta$ satisfies $s_i=0$ or $r_{00}=c\alpha^2$, where $c=constant$, then $\beta$ is a constant Killing form when $F$ is Einstein.
Firstly, we prove the following
\begin{theorem}\label{xxe5}
Let $F=\frac{\alpha^2}{\alpha-\beta}$ be a non-Riemann Matsumoto metric on an n-dimensional manifold $M$, $n\geq3$. Suppose $\beta$ satisfies $s_i=0$. Then $F$ is an Einstein metric if and only if $\alpha$ is Ricci-flat and $\beta$ is parallel with respect to $\alpha$. In this case, $F$ is Ricci-flat.
\end{theorem}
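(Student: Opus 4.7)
The strategy is to reduce the proof to Lemma \ref{xxe1} by showing that, under the hypothesis $s_i = 0$, the conformal coefficient supplied by Theorem \ref{xxa1} must vanish. Assuming $F$ is Einstein, Theorem \ref{xxa1} yields functions $\lambda, c$ on $M$ with $\overline{Ric} = \lambda \alpha^2$ and $r_{ij} = c\, a_{ij}$. In particular $r_j = c b_j$ and $r_0 = c\beta$; and the hypothesis $s_i = 0$ gives in addition $s_0 = 0$ and $s^i = 0$. It therefore suffices to prove $c \equiv 0$, for then $r_{ij} = 0$, $\beta$ becomes a constant Killing form, and Lemma \ref{xxe1} closes the argument.

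Plugging $b_{i|j} = c\, a_{ij} + s_{ij}$ into the Ricci commutation identity for $b_i$ and taking an appropriate trace produces a divergence identity of the shape $s^k_{\,\,i|k} = (n-1)c_i - \lambda b_i$ (up to a universal sign fixed by convention); when $c = 0$ this recovers the relation $s^l_{\,\,k|l} = \lambda b_k$ from the proof of Lemma \ref{xxe1}. Contracting with $y^i$ and with $b^i$ then expresses $s^k_{\,\,0|k}$ and $s^j_{\,\,k} s^k_{\,\,j}$ in closed form, modulo extra $c$-dependent terms. Next, under $s_0 = 0$ and $r_{00} = c\alpha^2$, the tensor $T^i$ of Theorem \ref{xxa1} simplifies to
\[
T^i = -\frac{\alpha^2}{2\beta-\alpha}\, s^i_{\,\,0} - \frac{3c\,\alpha^3}{3\beta - (2b^2+1)\alpha}\, b^i + \frac{c\,\alpha(4\beta - \alpha)}{2(3\beta - (2b^2+1)\alpha)}\, y^i.
\]
Substituting this $T^i$, the identity $\overline{Ric} = \lambda\alpha^2$, and the Bianchi relations above into \eqref{xaa1} and clearing denominators yields a polynomial identity $\sum_m \tilde t_m\, \alpha^m = 0$ in $y$, with coefficients $\tilde t_m$ depending polynomially on $c, c_i, \lambda, \sigma, b^2$ and $s_{0k} s^k_{\,\,0}$.

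The involution $y \mapsto -y$ splits this identity into an even and an odd subsystem; by irreducibility of $\alpha^2$ as a polynomial in $y$ and coprimality of powers of $\beta$ with $\alpha^2$, each subsystem must vanish coefficient-wise in powers of $\alpha^2$. The central claim is that, among these coefficients, one equation — plausibly the extreme $\alpha$-power of the even subsystem, by analogy with $\bar{t}_2$ from the proof of Theorem \ref{xxa1} — takes the form $(An + B)\, c\, \Phi = 0$, with $A,B$ explicit integers for which $An + B \neq 0$ when $n \geq 3$ and $\Phi$ a nonvanishing polynomial expression in $\alpha, \beta, b^2$. This forces $c \equiv 0$. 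Once $c = 0$ and $s_i = 0$, $\beta$ is a constant Killing form and Lemma \ref{xxe1} delivers $\overline{Ric} = 0$, $b_{i|j} = 0$, and $Ric = 0$. The converse is immediate: if $b_{i|j} = 0$ then $T^i \equiv 0$ and $Ric = \overline{Ric} = 0$.

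The principal obstacle is the polynomial bookkeeping required to isolate the coefficient that cleanly forces $c = 0$; even after using the Bianchi relations to eliminate $s^k_{\,\,0|k}$ and $s^j_{\,\,k}s^k_{\,\,j}$, the identity remains algebraically heavy, paralleling the degree-$14$ expansion that drove the proof of Theorem \ref{xxa1}. The role of the dimension hypothesis $n \geq 3$ is precisely to keep the prefactor $An + B$ away from zero, exactly as it enters the analogous step of Lemma \ref{xxe1} via \eqref{xe9}.
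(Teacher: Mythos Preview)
Your overall strategy is correct and matches the paper: reduce to Lemma~\ref{xxe1} by proving that the conformal factor $c$ supplied by Theorem~\ref{xxa1} must vanish. The gap is in how you propose to extract $c=0$. You expect a single coefficient of the even subsystem to read $(An+B)\,c\,\Phi=0$; this does not happen. After substituting $s_i=0$, $r_{00}=c\alpha^2$, $\overline{Ric}=\lambda\alpha^2$ (and even after your Bianchi relation $s^k_{\ 0|k}=\lambda\beta+(n-1)c_0$), the lowest $\alpha$-free coefficient of the even subsystem is a combination
\[
72\bigl[45n-133+(24n-56)b^2\bigr]\beta^9 c_0 \;+\; 144(21-8n)\beta^{10}c^2,
\]
so irreducibility of $\alpha^2$ only yields a relation of the form $A\,c_i + B\,c^2 b_i = 0$, which is far from $c=0$. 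The paper's proof therefore proceeds in \emph{two} steps: it first inspects the \emph{odd} subsystem, whose lowest coefficient is $432(5-2n)\beta^{10}c_0$, forcing $c_0=0$ (so $c$ is constant); only then does the even subsystem reduce to the clean lowest term $144(21-8n)\beta^{10}c^2$, giving $c=0$. Your plan is missing this intermediate ``$c$ is constant'' step, and without it the even-subsystem argument cannot close.

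Two smaller points. First, the paper does not invoke the Bianchi identity in this proof at all; it works directly with the precomputed coefficients $t_m$ from \eqref{xd2}--\eqref{xd3}, which already encode everything needed. Your Bianchi substitution is not wrong, but it is an unnecessary detour that does not buy the simplification you hope for. Second, the prefactors that actually arise, $5-2n$ and $21-8n$, are nonzero for every integer $n\ge 2$, so the hypothesis $n\ge 3$ is not what makes \emph{this} step work; it is inherited from Lemma~\ref{xxe1}.
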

\begin{proof}
If $F$ is an Einstein metric, then $r_{00}=c\alpha^2$ and $\overline{Ric}=\lambda \alpha^2$ by Theorem \ref{xxa1}. Plugging $s_i=0, r_{00}=c\alpha^2, \overline{Ric}=\lambda \alpha^2$ into the second equation of \eqref{xd3} yields
\begin{equation}\label{xf1}
\begin{aligned}
0&=432(5-2n)\beta^{10}c_0\\
+&\{96[48n-123+(12n-18)b^2]c^2\beta^9
-3456(2+b^2)\lambda\beta^9
-864b^kc_k\beta^9\\
&-24 [435n-602+(354n-440)b^2+(48n-56)b^4]c_0\beta^8
\}\alpha^2+\ldots.\\
\end{aligned}
\end{equation}\par
From \eqref{xf1}, we have that $\alpha^2$ divides $\beta^9c_0$. Since $\alpha^2$ is irreducible polynomial of $y^i$, we have $c_0=0$, i.e., $c=constant$. Plugging it into the first equation of \eqref{xd3} yields
\begin{equation}\label{xf2}
\begin{aligned}
0=&144(21-8n)\beta^{10}c^2\\
&-8\beta^6(4832\beta^2c^2 -81\beta^2s^j_{\,\,\,k}s^k_{\,\,\,j}
-568nb^2\beta^2c^2 +702\lambda\beta^2 +128b^4\beta^2c^2
\\
&-1177n\beta^2c^2 -324\sigma\beta^2 -64nb^4\beta^2c^2 +432\lambda
b^2\beta^2+1376b^2\beta^2c^2\\
&+270s_{0k}s^k_{\,\,\,0}+216b^2s_{0k}s^k_{\,\,\,0})\alpha^2+\ldots.\\
\end{aligned}
\end{equation}
From \eqref{xf2}, we get $c=0$ for the division reason again. So $\beta$ is a constant Killing form. Thus by Lemma \ref{xxe1}, we get the necessary conditions.

Sufficiency is obvious. It completes the proof of Theorem \ref{xxe5}.

\end{proof}

\vspace{4mm}

{\it Proof of Theorem \ref{xxa2}}\par
\vspace{4mm}
If $F$ is an Einstein metric, then $r_{00}=c\alpha^2$ by Theorem \ref{xxa1}. Thus $r_k=cb_k$. Since the length of $\beta$, with respect to $\alpha$, is constant, we have $0=b^2_{\,\,\,|k}=2(r_k+s_k)$, i.e., $r_k+s_k=0$. Hence we get $cb_k+s_k=0$. Contracting both sides of it with $b^k$ yields that $c=0$. Above all, $r_{00}=0$ and $s_k=0$, i.e., $\beta$ is a constant Killing form. Thus by Lemma \ref{xxe1}, we obtain that $\alpha$ is Ricci-flat and $\beta$ is parallel with respect to $\alpha$.

Conversely, if $\alpha$ is Ricci-flat and $\beta$ is parallel with respect to $\alpha$, then the length of $\beta$, with respect to $\alpha$, is constant. Hence by Lemma \ref{xxe1}, we get $F$ is Einstein. It completes the proof of Theorem \ref{xxa2}.\qed\par

\vspace{4mm}

Note that the condition that $s_k=0$ in Theorem \ref{xxe5}  is weaker than one that the length of $\beta$ is constant (with respect to $\alpha$) in Theorem \ref{xxa2}.
\vspace{3mm}

{\it Proof of Theorem \ref{xxa3}}\par
Assume $F$ is an Einstein metric and $\beta$ is a homothetic form, i.e., $r_{00}=c\alpha^2$, where $c=constant$. Then \eqref{xd3} holds, i.e.,
\begin{equation}\begin{cases}\label{xe33}
\begin{aligned}
&0=t_0+t_2\alpha^2+t_4\alpha^4+t_6\alpha^6+t_8\alpha^8
+t_{10}\alpha^{10}+t_{12}\alpha^{12}+t_{14}\alpha^{14},\\
&0=t_1+t_3\alpha^2+t_5\alpha^4+t_7\alpha^6+t_9\alpha^8+t_{11}\alpha^{10}+t_{13}\alpha^{12},
\end{aligned}\end{cases}
\end{equation}
 where
\begin{equation*}
\begin{aligned}
t_0&= 144(8n-11)c^2\beta^{10}\alpha^4,\\
\end{aligned}
\end{equation*}
and \begin{equation*}
\begin{aligned}
t_2&= 12\{1085n-1439+(792n-1032)b^2+64(n-1)b^4\}c^2\beta^8\alpha^4\\
&+1296\lambda\beta^{10}\alpha^2-288(8n-14)c^2\beta^{10}\alpha^2+864c\beta^9s_0\alpha^2.\\
\end{aligned}
\end{equation*}

For division reason again, we have $\alpha^2$ can divide $\beta(f\beta+gs_0)$, where  $f:=144(8n-11)c^2+1296\lambda-288(8n-14)c^2,g:=864c$. So we have $f\beta+gs_0=0$. Differentiating both sides of it by $y^i$ and contracting it with $b^i$ yields $f=0$. So $g=0$ or $s_0=0$.

$g=0$ implies that $c=0$. Plugging it into $f=0$ yields $\lambda=0$ and $s^k_{\,\,\,0|k}=0$. Substituting all these into \eqref{xe33} yields
\begin{equation}\label{xe34}
\begin{aligned}t_1=0,t_3=-432 (2n-5)\beta^9s_{0|0},
\end{aligned}
\end{equation}
and
\begin{equation}\label{xe35}
\begin{aligned}
t_0=t_2=0,t_4=72\{57n-142+(24n-56)b^2\}\beta^8s_{0|0}-144(8n-21)\beta^8s_0^2.
\end{aligned}
\end{equation}

From \eqref{xe34}, we know that $\alpha^2$ can divide $s_{0|0}$. Plugging it into \eqref{xe35} yields $\alpha^2$ can divide $s_0^2$. That is $s_0^2=k(x)\alpha^2$, which is a contraction unless $t(x)=0$, i.e., $s_0=0$.

Above all, $s_0=0$. This is the just case in Theorem \ref{xxe5}. It completes the proof of Theorem \ref{xxa3}.\qed\\

{\bf {Remark.}}\, B. Rezaei, etc., discussed Einstein Matsumoto metrics with constant Killing form. Meanwhile, they got wrong results. Theorem \ref{xxa2} and Theorem \ref{xxa3} generalize their study and Lemma \ref{xxe1} is the corrected version of Theorem 4.2 in \cite{rez}.

\vspace{4mm}

{\bf {Example.}}\,  Let $(M,\alpha)$ be an $5$-dimensional Riemanian manifold. Consider the Riemannian metric $\alpha=\sqrt{a_{ij}(x)y^iy^j},~(1\leq i,j\leq 5)$, which, in local coordinate $(x^i)$, can be described as follows
\begin{equation*}
(a_{ij})
 =\left(\begin{array}{ccccc}
(x^4)^2&0&0&0&0\\
0&(x^4)^2&0&0&0\\
0&0&(x^4)^{-1}&0&0\\
0&0&0&x^4&0\\
0&0&0&0&1
 \end{array}\right),
 \end{equation*}
where $x^4>0$. A direct computation shows that $\alpha$ is a non-Euclidean Ricci flat metric.
And let $\beta=cy^5$, where $c$ is a nonzero constant and $c^2<\frac 12$. It is easy to check that such a $\beta$ is parallel with respect to $\alpha$, i.e., $b_{i|j}=0$. Define $F=\frac{\alpha^2}{\alpha-\beta}$. Thus by Theorem \ref{xxa2}, we conclude that
$F=\frac{\alpha^2}{\alpha-\beta}$ is a Ricci-flat Matsumoto metric.

\vspace{6mm}
\begin{theorem}\label{xxe2}
Let $F=\frac{\alpha^2}{\alpha-\beta}$ be a non-Riemannian Matsumoto metric on an n-dimensional manifold $M$, $n\geq 3$. Suppose the length of $\beta$ with respect to $\alpha$ is constant. Then $F$ is of constant flag curvature $K$ if and only if the following conditions hold: \\
(1)  $\alpha$ is a flat metric;\\
(2) $\beta$ is parallel with respect to $\alpha$.\\
In this case, $K=0$ and $F$ is locally Minkowskian.
\end{theorem}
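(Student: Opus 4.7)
The approach is to piggyback on Theorem \ref{xxa2}. Constant flag curvature $K$ automatically gives $Ric = (n-1)KF^2$, so $F$ is an Einstein metric with scalar $\sigma = (n-1)K$. Since $n \geq 3$ and $\beta$ has constant length with respect to $\alpha$ by hypothesis, Theorem \ref{xxa2} forces $\alpha$ to be Ricci-flat and $\beta$ to be parallel with respect to $\alpha$, and $F$ itself to be Ricci-flat. Matching scalars gives $K = 0$.

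Next I would upgrade Ricci-flatness of $\alpha$ to full flatness. Parallelism of $\beta$ kills $r_{ij}$ and $s_{ij}$, so $s^i_{\,\,0}$, $s_0$, $r_{00}$, and the conformal factor $c$ all vanish; every term in the expression for $T^i$ from Theorem \ref{xxa1} therefore vanishes, giving $G^i = \bar{G}^i$. Because the Finslerian Riemann curvature $R^i_{\,\,k}$ is determined purely by the spray via \eqref{xba4}, this yields $R^i_{\,\,k} = \bar{R}^i_{\,\,k}$. Constant flag curvature $K = 0$ of $F$ is equivalent to $R^i_{\,\,k} = 0$, so $\bar{R}^i_{\,\,k} \equiv 0$. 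For a Riemannian metric, $\bar{R}^i_{\,\,k}(x,y)$ is a quadratic form in $y$ whose polarization together with the standard skew-symmetries and first Bianchi identity recovers the full curvature tensor $\bar{R}^i_{\,\,jkl}$; hence $\bar{R}^i_{\,\,jkl} \equiv 0$, i.e., $\alpha$ is flat.

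Finally, around any point of $M$ I would pick coordinates in which $a_{ij} = \delta_{ij}$. Parallelism of $\beta$ then reads $\partial_j b_i = 0$, so the components $b_i$ are constants. Thus $F = \alpha^2/(\alpha - \beta)$ depends only on $y$, and $F$ is locally Minkowskian. The converse is immediate: if $\alpha$ is flat and $\beta$ parallel, the same coordinate argument produces local Minkowskiness, which has vanishing Riemann curvature and hence constant flag curvature $K = 0$. The only delicate step I foresee is the polarization from $\bar{R}^i_{\,\,k} \equiv 0$ to $\bar{R}^i_{\,\,jkl} = 0$, since it is precisely what distinguishes flatness from the weaker Ricci-flatness already supplied by Theorem \ref{xxa2}.
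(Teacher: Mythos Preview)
Your proposal is correct and follows essentially the same route as the paper: reduce constant flag curvature to the Einstein condition, invoke Theorem~\ref{xxa2} to obtain $K=0$, $\beta$ parallel and $G^i=\bar G^i$, and then conclude $\bar R^i_{\ k}=0$ so that $\alpha$ is flat and $F$ is locally Minkowskian. You actually spell out more carefully than the paper does the two steps it takes for granted---why $T^i$ vanishes and why $\bar R^i_{\ k}\equiv 0$ forces the full Riemannian curvature tensor to vanish---so your caution about the polarization step is well placed but not a gap.
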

\begin{proof}
Suppose that $F$ is of constant flag curvature $K$, i.e.,
\begin{equation*}
R^i_{\,\,\,k}=K(F^2\delta^i_{\,\,\,k}-g_{ij}y^jy^k).
\end{equation*}
Then we have
\begin{equation}\label{ze11}
Ric=\sigma F^2,  \qquad\qquad \sigma:=(n-1)K,
\end{equation}
which means that $F$ is Einstein. Since the length of $\beta$, with respect to $\alpha$, is constant, by Theorem \ref{xxa2}, we get $\alpha$ is Ricci-flat and $\beta$ is parallel with respect to $\alpha$. In this case, $F$ is Ricci-flat, which means that $K=0$. So $G^i=\bar{G}^i$ and $R^i_{\,\,\,k}=\bar{R}^i_{\,\,\,_k}=0$, i.e., $\alpha$ is Euclidean.

Conversely, if $\alpha$ is Euclidean and $\beta$ is parallel with respect to $\alpha$, then $R^i_{\,\,\,k}=0$, i.e., $K=0$. It completes the proof of Theorem \ref{xxe2}.
\end{proof}

{\bf {Remark.}}\, In literature [9], the proof of Theorem 1 depends on Theorem 3, of which the proof includes the assumption condition that the length of $\beta$ with respect to $\alpha$ is constant, see the step 1 in the proof of Theorem 3 (A and $A_i$ (i= 0,1,2, ...) are some constants) in \cite{raf2}. So, Theorem \ref{xxe2} here is the correct version of Theorem 1 in \cite{raf2}. We do not know what happened to the case that the above assumption is canceled?

\vspace{1cm}
\section{$S$ curvature}\label{xxxs}

The $S$-curvature is an important geometric quantity. In this section, we investigate the $S$-curvature of Matsumoto metrics.\par
 For a Finsler metric $F$ and a volume form $dV=\sigma_{F}(x)\,dx$ on an $n$-dimensional manifold $M$, the  $S$-curvature $S$ is given by
\begin{equation}\label{xg1}
S(x, y)=\frac{\partial G^i}{\partial y^i}-y^i\frac{\partial
\ln\sigma_F }{\partial x^i}.
 \end{equation}
The volume form can be the Busemann-Hausdorff volume form $dV_{BH}=\sigma_{BH}dx$ or the Holmes-Thompson volume form
$dV_{HT}=\sigma_{HT}dx$. \par
 To compute the S-curvature, one should first find a formula for the Busemann-Hausdorff volume forms $dV_{BH}$ and the Holmes-Thompson $dV_{HT}$.
\begin{proposition}(pro4.1 in \cite{bacso})\label{xxg1}
Let $F=\alpha\phi(s)$, $s=\frac{\beta}{\alpha}$, be an $(\alpha,\beta)$-metric on an $n-$dimensional manifold $M$.  Denote
\begin{equation}\label{xg2}f(b):=\begin{aligned}\begin{cases}
&\frac{\int^{\pi}_0\sin^{n-2}(t)dt}{\int^{\pi}_0\frac{\sin^{n-2}(t)}{\phi(b\cos
t)^n}dt}~~\qquad\qquad\,\quad\quad\quad~~if~~\quad~~ dV = dV_{BH},\\
&\frac{\int^{\pi}_0\sin^{n-2}(t)T(b\cos t)dt}
{\int^{\pi}_0\sin^{n-2}(t)dt}
~~\qquad\qquad~~~\quad\quad~~if~~\quad~~ dV = dV_{HT}.\\
\end{cases}\end{aligned}\end{equation}
Then the volume form $dV$ is given by $dV = f(b)dV_{\alpha}$, where $dV_{\alpha}=\sqrt{det(a_{ij})}dx$ denotes the Riemannian volume form of $\alpha$, $T(s):=\phi(\phi-s\phi')^{n-2}[\phi-s\phi'+(b^2-s^2)\phi'']$.\end{proposition}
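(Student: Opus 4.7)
The plan is to work pointwise at each $x \in M$, choose an $\alpha$-orthonormal frame aligned with $\beta$, and reduce the computation of $\sigma_F/\sigma_\alpha$ to an elementary integral in spherical coordinates. Fix $x$ and pick an orthonormal basis $\{e_i\}$ of $(T_xM,\alpha)$ so that $\beta^\sharp = b\,e_n$, where $b = \|\beta_x\|_\alpha$. In the associated linear coordinates one has $\alpha(y) = |y|$ (Euclidean norm), $\beta(y) = b\,y^n$, $\det(a_{ij}) = 1$, and hence $\sigma_\alpha(x) = 1$. It therefore suffices to verify $\sigma_{BH}(x) = f(b)$ and $\sigma_{HT}(x) = f(b)$ with $f$ as in \eqref{xg2}.

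For the Busemann--Hausdorff volume I would start from the definition $\sigma_{BH}(x) = \omega_n/\mathrm{Vol}(B_xM)$, where $B_xM = \{y : F(x,y) < 1\}$ and $\omega_n = \mathrm{Vol}(B^n)$. Passing to spherical coordinates with $y^n = r\cos t$ turns the condition $r\,\phi(b\cos t) < 1$ into $r < 1/\phi(b\cos t)$, since $s = \beta/\alpha = b\cos t$ depends only on the polar angle. Writing $dy = r^{n-1}\sin^{n-2} t\, dr\, dt\, d\Omega_{n-2}$ and performing the radial integration yields
\begin{equation*}
\mathrm{Vol}(B_xM) = \frac{\mathrm{Vol}(S^{n-2})}{n}\int_0^\pi \frac{\sin^{n-2} t}{\phi(b\cos t)^n}\, dt,
\end{equation*}
while specialization to $\phi\equiv 1$ gives $\omega_n = \frac{\mathrm{Vol}(S^{n-2})}{n}\int_0^\pi \sin^{n-2} t\, dt$; the quotient is precisely the BH expression for $f(b)$.

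For Holmes--Thompson I would use the Legendre transform $y \mapsto \xi_i = \tfrac12[F^2]_{y^i}$, whose Jacobian is exactly $\det(g_{ij})$ and which maps $B_xM$ bijectively onto the unit codisk $D^*_xM = \{F^*\le 1\}$; this yields $\sigma_{HT}(x) = \omega_n^{-1}\int_{B_xM}\det(g_{ij})\, dy$. The essential ingredient is the standard determinant identity for $(\alpha,\beta)$-metrics,
\begin{equation*}
\det(g_{ij}) = \phi^{n+1}(\phi - s\phi')^{n-2}\bigl[\phi - s\phi' + (b^2-s^2)\phi''\bigr]\det(a_{ij}),
\end{equation*}
which in the chosen orthonormal frame equals $\phi(s)^n\, T(s)$. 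Repeating the spherical integration, the $\phi^n$ factors cancel between the integrand and the upper radial limit, leaving $\int_0^\pi \sin^{n-2} t\, T(b\cos t)\, dt$ divided by $\int_0^\pi \sin^{n-2} t\, dt$, i.e.\ the HT expression for $f(b)$.

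The one step that is not direct calculation is the determinant identity for $g_{ij}$, which I would derive by writing $g_{ij} = \rho\, a_{ij} + \rho_0\, b_ib_j + \rho_1(b_i\alpha_j + b_j\alpha_i) + \rho_2\, \alpha_i\alpha_j$ with $\alpha_i = y_i/\alpha$ and explicit coefficients in $\phi$ and $s$, and then applying the matrix-determinant lemma to this rank-two perturbation of $\rho\, a_{ij}$. Finally, independence of $f(b)$ from the particular orthonormal frame is automatic since both integrals depend on the frame only through $b = \|\beta_x\|_\alpha$, so the pointwise identity globalizes to $dV = f(b)\,dV_\alpha$.
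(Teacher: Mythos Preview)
The paper does not supply its own proof of this proposition; it is quoted as Proposition~4.1 from B\'acs\'o--Cheng--Shen \cite{bacso} and used as a black box. Your argument is correct and is the standard derivation found in that reference and in Chern--Shen \cite{CHEN}: reduce to an $\alpha$-orthonormal frame with $\beta^\sharp$ along the last axis so that $s=b\cos t$ depends only on the polar angle, then integrate radially. The determinant identity $\det(g_{ij})=\phi^{n+1}(\phi-s\phi')^{n-2}\bigl[\phi-s\phi'+(b^2-s^2)\phi''\bigr]\det(a_{ij})$ is likewise standard, and your plan to obtain it via the matrix-determinant lemma applied to the rank-two perturbation $g_{ij}=\rho\,a_{ij}+\rho_0 b_ib_j+\rho_1(b_i\alpha_j+b_j\alpha_i)+\rho_2\alpha_i\alpha_j$ is exactly how it is usually done. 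There is nothing in this paper to compare against, but nothing is missing from your proof.
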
\par

By Proposition \ref{xxc1} and Proposition \ref{xxg1}, we have
\begin{equation}\label{xg3}\begin{aligned}
\frac{\partial G^i}{\partial y^i}&=\frac{\partial
\bar{G}^i}{\partial
y^i}+\frac{2s_0}{(2s-1)^2}+\frac{6(b^2-s^2)}{(2s-1)(3s-2b^2-1)^2}s_0
-\frac{2s}{(2s-1)(3s-2b^2-1)}s_0\\
& +\frac{4(b^2-s^2)}{(2s-1)^2(3s-2b^2-1)}s_0
+(n+1)\frac{4s-1}{(2s-1)(3s-2b^2-1)}s_0\\
&+\frac{3(b^2-s^2)}{\alpha(3s-2b^2-1)^2}r_{00}
+(n+1)\frac{4s-1}{2\alpha(3s-2b^2-1)}r_{00}
-\frac{2}{(3s-2b^2-1)^2}r_{0};
\end{aligned}\end{equation}
and
\begin{equation}\label{xg4}\begin{aligned}
y^i\frac{\partial \ln \sigma_ F}{\partial x^i} &=y^i\frac{\partial
\ln \sigma_ {\alpha}}{\partial x^i}+\Lambda(r_0+s_0),\\
 \end{aligned}\end{equation}
where $\Lambda:=\frac{f'(b)}{bf(b)}$.\par
 Plugging \eqref{xg3} and \eqref{xg4} into \eqref{xg1}, we obtain
\begin{equation}\label{xg5}\begin{aligned}
S&=\frac{2s_0}{(2s-1)^2}+\frac{6(b^2-s^2)}{(2s-1)(3s-2b^2-1)^2}s_0
-\frac{2s}{(2s-1)(3s-2b^2-1)}s_0\\
& +\frac{4(b^2-s^2)}{(2s-1)^2(3s-2b^2-1)}s_0
+(n+1)\frac{4s-1}{(2s-1)(3s-2b^2-1)}s_0\\
&+\frac{3(b^2-s^2)}{\alpha(3s-2b^2-1)^2}r_{00}
+(n+1)\frac{4s-1}{2\alpha(3s-2b^2-1)}r_{00}
-\frac{2}{(3s-2b^2-1)^2}r_{0}\\
&+\Lambda(r_0+s_0).\\
 \end{aligned}\end{equation}

 \vspace{3mm}
{\it Proof of Theorem \ref{xxa4}}\par
 Assume that $S=0$. Multiplying both sides of \eqref{xg5} by $2\alpha^5(2s-1)^2(3s-2b^2-1)^2$, we obtain
\begin{equation}\label{xg6}\begin{aligned}
0&=24(2n+1)\beta^4r_{00}+\{
-4(13+8b^2+19n+8nb^2)\beta^3r_{00}+72\Lambda\beta^4r_0+72\Lambda\beta^4s_0\}\alpha\\
&+\{
2(19+32b^2+22n+20nb^2)\beta^2r_{00}-2(60+48b^2)\Lambda\beta^3r_0\\
&\quad  -24(1-2n+5\Lambda+4\Lambda b^2)\beta^3s_0\}\alpha^2\\
&+\{
-(11+40b^2+11n+16nb^2)\beta r_{00}+2(-8+37\Lambda+64\Lambda b^2+16\Lambda b^4)\beta^2r_0\\
&\quad  +2(12-26n-16nb^2+37\Lambda+64\Lambda b^2+16\Lambda b^4)\beta^2s_0\}\alpha^3\\
& +\{
(1+8b^2+n+2nb^2)r_{00}+4(4-5\Lambda-14\Lambda b^2-8\Lambda b^4)\beta r_0\\
&\quad  -2(5-8b^2-9n-12nb^2+10\Lambda+28\Lambda b^2+16\Lambda b^4)\beta s_0\}\alpha^4\\
&+\{
-(4-2\Lambda-8\Lambda b^2-8\Lambda b^4)r_0\\
&\quad +2(1-4b^2-n-2nb^2+\Lambda+4\Lambda b^2+4\Lambda b^4)s_0\}\alpha^5.\\
\end{aligned}\end{equation}\par
\eqref{xg6} is equivalent to the following
\begin{equation}\label{xg7}\begin{cases}\begin{aligned}
0&=24(2n+1)\beta^4r_{00}\\
&+\{ 2(19+32b^2+22n+20nb^2)\beta^2r_{00}-2(60+48b^2)\Lambda
\beta^3r_0-24(1-2n+5\Lambda +4\Lambda b^2)\beta^3s_0\}\alpha^2\\
& +\{
(1+8b^2+n+2nb^2)r_{00}+4(4-5\Lambda -14\Lambda b^2-8\Lambda b^4)\beta r_0\\
&\quad -2(5-8b^2-9n-12nb^2+10\Lambda +28\Lambda b^2+16\Lambda b^4)\beta s_0\}\alpha^4,\\
0&=\{
-4(13+8b^2+19n+8nb^2)\beta^3r_{00}+72\Lambda \beta^4r_0+72\Lambda \beta^4s_0\}\\
&+\{
-(11+40b^2+11n+16nb^2)\beta r_{00}+2(-8+37\Lambda +64\Lambda b^2+16\Lambda b^4)\beta^2r_0\\
&\quad +2(12-26n-16nb^2+37\Lambda +64\Lambda b^2+16\Lambda b^4)\beta^2s_0\}\alpha^2\\
&+\{ -(4-2\Lambda -8\Lambda b^2-8\Lambda b^4)r_0+2(1-4b^2-n-2nb^2+\Lambda +4\Lambda b^2+4\Lambda b^4)s_0\}\alpha^4.\\
\end{aligned}\end{cases}\end{equation}\par

From the first equation of \eqref{xg7}, we have
\begin{equation}\label{xg8}
r_{00}= c\alpha^2,
\end{equation}
for some function $c=c(x)$ on $M$. So $r_0 =  c\beta$.

Plugging \eqref{xg8} and $r_0 =  c\beta$ into \eqref{xg7}, we obtain
\begin{equation}\label{xg9}\begin{cases}\begin{aligned}
0=&
24c(1+2n-5\Lambda -4\Lambda b^2)\beta^4-24(1-2n+5\Lambda +4\Lambda b^2)\beta^3s_0\\
+&\{
2c(27+32b^2+22n+20nb^2-10\Lambda -28\Lambda b^2-16\Lambda b^4)\beta^2\\
&-2(5-8b^2-9n-12nb^2+10\Lambda +28\Lambda b^2+16\Lambda b^4)\beta
s_0\}\alpha^2
+c(1+8b^2+n+2nb^2)\alpha^4,\\
0=& 72\Lambda \beta^4( c\beta+s_0).\\
\end{aligned}\end{cases}\end{equation}
From the second equation of \eqref{xg9}, we have $c\beta+s_0=0$ for $n\geq2$. Differentiating both sides  of it with respect to $y^i$  yields $cb_i+s_i=0$. Contracting it with $b^i$ gives  $cb^2=0$. So $c=0$ and $s_0=0$. Thus $r_{00}=0,s_0=0$, i.e., $\beta$ is a constant Killing form.

Conversely, if $\beta$ is a constant Killing form, then $S=0$ by \eqref{xg5}. Thus we have completed the proof of Theorem \ref{xxa4}.\qed
\vspace{3mm}

By Theorem \ref{xxa4} and Theorem \ref{xxa2}, we can directly get Corollary \ref{xxa5}.

\vspace{3mm}

\vspace{1cm}
\section{Appendix: coefficients in \eqref{xd2}}\label{xxx6}
\begin{equation*}
\begin{aligned}
t_0&= 144(8n-11)\beta^{10}r_{00}^2;\\
\end{aligned}
\end{equation*}

\begin{equation*}
\begin{aligned}
t_1&= -96\{61n-82+(20n-26)b^2\}\beta^9r_{00}^2-432(2n-3)\beta^{10}r_
{00|0};\\
\end{aligned}
\end{equation*}

\begin{equation*}
\begin{aligned}
t_2&= 12\{1085n-1439+(792n-1032)b^2+64(n-1)b^4\}\beta^8r_{00}^2\\
&+1296\beta^{10}\overline{Ric}-288(8n-14)\beta^9r_0r_{00}+864\beta^9s_0r_{00}
+72\{63n-91+(24n-32)b^2\}\beta^9r_{00|0};\\
\end{aligned}
\end{equation*}

\begin{equation*}
\begin{aligned}
t_3&= -864(2n-1)\beta^9r_{0k}s^k_{\,\,\,0} -24\{697n-926+(852n-11
44)b^2+(152n-144)b^4\}\beta^7r_{00}^2\\
&-3456(2+b^2)\beta^9\overline{Ric} +96\{118n-205+(32n-44)
b^2\}\beta^8r_{00}r_0
-864\beta^9r_{00}r^k_{\,\,\,k}\\
&-48\{-16n+97+16(n-1)b^2\}
\beta^8r_{00}s_0-864\beta^9b^kr_{00|k}\\
&-24 \{435n-602+(354n-440)b^2+(48n-56)b^4\}\beta^8r_{00|0}+864\beta^9r_{0|0}-432 (2n-5)\beta^9s_{0|0};\\
\end{aligned}
\end{equation*}

\begin{equation*}
\begin{aligned}
t_4&=
144\{57n-22+(24n-8)b^2\}\beta^8r_{0k}s^k_{\,\,\,0}\\
&+3\{4606n-6255+(8400n-12080)b^2+(2480n-2272) b^4\}\beta^6r_{00}^2\\
& +216(15+4b^2)
(5+4b^2)\beta^8\overline{Ric}-32\{752n-1301+(440n-566)b^2+32(n-1)b^4\}\beta^7r_{00}
r_0\\
&+864(5+2b^2)\beta^8(r_{00}r^k_{\,\,\,k}+b^kr_{00|k})-576\beta^8rr_{00}\\
&+8
\{-413n+1322+(376n-664)b^2+64(n-1)b^4\}\beta^7r_{00}s_0\\
&+4\{3473n-4583+(4512n-5136)b^2+(1320n-1368)b^4+64(n-1)b^6\}\beta^7r_{00|0}\\
& -864(5+2b^2)\beta^8r_{0|0}+576
\beta^8r_0^2-1152(2n-3)\beta^8r_0s_0
+72\{57n-142+(24n-56)b^2\}\beta^8s_{0|0}\\
&-144(8n-21)\beta^8s_0^2-1296\beta^9s^k_{\,\,\,0|k};\\
\end{aligned}
\end{equation*}

\begin{equation*}
\begin{aligned}
t_5&=
-24\{699n-178+(636n-112)b^2+(96n-16)b^4\}\beta^7s^k_{\,\,\,0}r_{0k}\\
&-12\{643n-911+(1642n-2645)b^2+(712n-656)b^4\}\beta^5r_{00}^2\\
&-24
(917+1560b^2+672b^4+64b^6)\beta^7\overline{Ric}\\
&+16\{1814n-3143+(1712n-2024) b^2+(272n-224)b^4\}\beta^6r_{00}r_0\\
&-144 (65+56b^2+8b^4)\beta^7(b^kr_{00|k} +r_
{00}r^k_{\,\,\,k})+384(7+2b^2)\beta^7rr_{00}\\
& -4\{-1487n+3338+(1240n-
3952)b^2+(544n-736)b^4\}\beta^6r_{00}s_0\\
&-\{11854n-14857+
(21768n-22176)b^2+(10272n-9024)b^4+(1088n-896)b^6\}\beta^6r_{00|0}\\
&+144 (65+56b^2+8b^4)\beta^7r_{0|0}-384(7+2b^2)\beta^7r_0^2 +48\{2
12n-321+(64n-72)b^2\}\beta^7r_0s_0\\
& -12\{699n-1738+(636n-1456)b^2+(96n-208) b^4\}\beta^7s_{0|0}-324
\beta^7s_{0k}s^k_{\,\,\,0}\\
&+864
\beta^8(r_ks^k_{\,\,\,0}+r_{0k}s^k-b^ks_{0|k}-r^k_{\,\,\,k}s_0)+216(29+16b^2)\beta^8s^k_{\,\,\,0|k}-432(2n-3)\beta^8s^k_{\,\,\,0}s_k\\
&+48\{114n-239+(24n-52)b^2\}\beta^7s_0^2;\\
\end{aligned}
\end{equation*}

\begin{equation*}
\begin{aligned}
t_6&= 4\{4849n-516+(7116n+96)b^2+(2352n+96)b^4+128nb^6\}\beta^6s^k_
{\,\,\,0}r_{0k}\\
&+3/4\{3965n-
5929+(13592n-25576)b^2+(8096n-7936)b^4\}\beta^4r_{00}^2\\
&+(19225+46208b^2+32064b^4+6656b^6+256b^8) \beta^6\overline{Ric}\\&
-12\{1828n-3201+(2464n-2616)b^2+(640n-384)b^4\}\beta^5r_
{00}r_0\\
&+32(361+501b^2+156b^4+8b^6)\beta^6(r^k_{\,\,\,k}r_{00}+b^kr_{00|k})-32
(167+104b^2+8b^4)\beta^6r_{00}r\\
&+6\{-973n+1745+(776n-4336)
b^2+(656n-1216)b^4\}\beta^5r_{00}s_0\\
&+3/2\{4525n-5373+(10
968n-9696)b^2+(7392n-5088)b^4+(1280n-768)b^6\}\beta^5r_{00|0}\\
&-32(361+501b^2+156b^4+8b^6)
\beta^6r_{0|0}+32(167+104b^2+8b^4)\beta^6r_0^2\\
&-8\{2372n-3657+
(1568n-1656)b^2+(128n-96)b^4\}\beta^6s_0r_0+432(9+4b^2)\beta^7s_0r^k_
{\,\,\,k}\\
&+2\{4849n-12068+(711
6n-15936)b^2+(2352n-4896)b^4+(128n-256)b^6\}\beta^6s_{0|0}\\
&+432
(9+4b^2)\beta^7(b^ks_{0|k}-r_{0k}s^k-s^k_{\,\,\,0}r_k)-108(121+14
4b^2+32b^4)\beta^7s^k_{\,\,\,0|k}-576\beta^7s_0r\\
&+54(23+16b^2)\beta^6s^k_
{\,\,\,0}s_{0k}+72\{51n-73+(24n-32)b^2\}\beta^7s^k_{\,\,\,0}s_k\\
&-4\{2737n
-4424+(1240n-1208)b^2+(64n-128)b^4\}\beta^6s_0^2-324\beta^8s^j_{\,\,\,k}s^k_{\,\,\,j}-1296\beta^8\sigma;\\
\end{aligned}
\end{equation*}

\begin{equation*}
\begin{aligned}
t_7 &= -2(401+3504b^2+2400b^4+256b^6+7005n+14
652nb^2+7920nb^4+960nb^6)\beta^5s^k_{\,\,\,0}r_{0k}\\
&-3(-411-2650b^2-10
16b^4+263n+1170nb^2+916nb^4)\beta^3r_{00}^2\\
&-2(5651+17932b^2+17760b^4+6016b^6+512 b^8)\beta^5\overline{Ric}\\
&+6(-3225-
2988b^2-384b^4+1798n+3232nb^2+1216nb^4)\beta^4r_{00}r_0\\
&-2(4483+8880b^2+4512
b^4+512b^6)\beta^5(r_{00}r^k_{\,\,\,k}+b^kr_{00|k})+32(37+8b^2)(5+4b^2)\beta^5r_{00}r\\
&-3
(1817-8560b^2-3328b^4-1135n+968nb^2+1328nb^4)\beta^4r_{00}s_0\\
&-3/2(-1983-3960b^2-2232b^4-384b^6+1763n+5394nb^2+4848nb^4+12
16nb^6)\beta^4r_{00|0}\\
&+2 (4483+8880b^2+4512b^4+512b^6)\beta^5r_{0|0}-32(37+8b^2)
(5+4b^2)\beta^5r_0^2\\
 &-4(-
7769-5144b^2-560b^4+4884n+5280nb^2+960nb^4)\beta^5s_0r_0\\
&-(-
17531-32016b^2-15648b^4-1792b^6+7005n+14652nb^2+7920nb^4+960nb^6)\beta^5s_{0|0}\\
&+72(103+100b^2+16b^4) \beta^6(s^kr_{0k}+s^k_{\,\,\,0}r_k-s_0r^k_
{\,\,\,k}-b^ks_{0|k})\\
&+6
(2579+4944b^2+2400b^4+256b^6)\beta^6s^k_{\,\,\,0|k}-216(9+14b^2+4b^4)\beta^5s^k_{\,\,\,0}
s_{0k}\\
&-12(-739-
688b^2-112b^4+546n+564nb^2+96nb^4)\beta^6s^k_{\,\,\,0}s_k\\
&+4
(-3622+656b^2+320b^4+3003n+2184nb^2+240nb^4)\beta^5s_0^2\\
&+8(-7769-514
4b^2-560b^4+4884n+5280nb^2+960nb^4)\beta^5s_0r_0+96(25+8b^2)
\beta^6s_0r\\
&+
108(13+8b^2)\beta^7s^j_{\,\,\,k}s^k_{\,\,\,j}+864(5+4b^2)\beta^7\sigma+432\beta^7s^ks_k;\\
\end{aligned}
\end{equation*}

\begin{equation*}
\begin{aligned}
t_8&=2(769+4596b^2+4512b^4+896b^6+3285n+9126nb^2+7128nb^4+1440nb^6)\beta^4r_{0k}s^k_{\,\,\,0}\\
&+3/2(-145-1436b^2-
684b^4+93n+520nb^2+518nb^4)\beta^2r_{00}^2\\
&+(4535+18184b^2+24024b^4+11
776b^6+1664b^8)\beta^4\overline{Ric}\\
&-12
(-539-509b^2+16b^4+288n+660nb^2+336nb^4)\beta^3r_{00}r_0\\
&+2(2273+6006b^2+4416b^4+832b^6)\beta^4 (r_{00}r^k_{\,\,\,k}+b^kr_{00|k}) -4 (1001+1472b^2+416b^4)\beta^4r_{00}r\\
&+12(165-1293b^2-672b^4-100n+110nb^2+206nb^4)\beta^3r_{00}s_0\\
&+3(-247-492b^2-186b^4+16b^6+231n+864nb^2+990nb^4+336nb^6)\beta^3r_
{00|0}\\
&-2(2273+6006b^2+4416b^4+832b^6)\beta^4r_{0|0}+4(1001+1472b^2+416b^4)\beta^4r_0^2\\
&-2(-
10175-8360b^2-1040b^4+6084n+9504nb^2+2880nb^4)\beta^4r_0s_0\\
&+4(1961+3108b^2+1104b^4+64b^6)\beta^5(r^k_
{\,\,\,k}s_0+b^ks_{0|k}-r_{0k}s^k-s^k_{\,\,\,0}r_k)\\
&+(-8323-19428b^2-13152b^4-2432b^6+3285n+9126nb^2+7128nb^4+14
40nb^6)\beta^4s_{0|0}\\
&-16(718+1961b^2+1554b^4+368b^6+16b^8)\beta^5s^k_
{\,\,\,0|k}+6(269+696b^2+456b^4+64b^6)\beta^4s^k_ {\,\,\,0}s_{0k}\\
&+2(-4075-5856b^2-2064b^4-128b^6+3211n+5424nb^2+2064nb^4+12
8nb^6)\beta^5s^k_{\,\,\,0}s_k\\
&-(-
7009+13792b^2+7744b^4+512b^6+7881n+8088nb^2+1392nb^4)\beta^4s_0^2\\
&-27(95+128b^2+32b^4)\beta^6s^j_{\,\,\,k}s^k_{\,\,\,j}-
216(29+48b^2+16b^4)\beta^6\sigma-864(2+b^2)\beta^6 s^ks_k\\
&-16
(259+184b^2+16b^4)\beta^5rs_0;\\
\end{aligned}
\end{equation*}

\begin{equation*}
\begin{aligned}
t_9&= -4(2
11+1428b^2+1938b^4+608b^6+501n+1764nb^2+1854nb^4+552nb^6)\beta^3s^k_{\,\,\,0}
r_{0k}\\
&-3(-7-114b^2-
68b^4+5n+34nb^2+42nb^4)\beta r_{00}^2\\
&-4
(1+2b^2)(307+894b^2+756b^4+176b^6)\beta^3\overline{Ric}\\
&+12
(-116-99b^2+56b^4+58n+164nb^2+108nb^4)\beta^2r_{00}r_0\\
&-4(377+1272b^2+1266b^4+352b^6)\beta^3(r_{00}r^k_{\,\,\,k}+b^kr_{00|k}-r_{0|0})\\
&+16(106+211b^2+88b^4)\beta^3rr_{00}
-6(85-934b^2-636b^4-41n+72nb^2+158nb^4)\beta^2r_{00}s_0\\
&-3(-40-68b^2+34b^4+56b^6+39n+174nb^2+246nb^4+10
8nb^6)\beta^2r_{00|0}\\
&-16(106+211b^2+88b^4)\beta^3r_0^2+16(-530-479b^2-
17b^4+294n+618nb^2+276nb^4)\beta^3s_0r_0\\
&+8(481+568b^2+112b^4)\beta^4 rs_0\\
&-2(-1297-
3660b^2-3126b^4-800b^6+501n+1764nb^2+1854nb^4+552nb^6)\beta^3s_{0|0}\\
&+4(1261+2886b^2+1704b^4+224b^6)\beta^4(s^kr_{0k}+s^k_{\,\,\,0}r_k-r^k_{\,\,\,k}s_0-b^ks_{0|k})\\
&+2 (2779+10088b^2+11544b^4+4544b^6+448b^8)\beta^4s^k_{\,\,\,0|k}\\
&-4(1+2b^2)(193+342b^2+132b^4+8b^6) \beta^3s^k_{\,\,\,0}s_{0k}\\
&-2(-2245-4200b^2-
2280b^4-320b^6+1897n+4614nb^2+2928nb^4+416nb^6)\beta^4s^k_{\,\,\,0}s_k\\
&+4
(-549+3536b^2+2828b^4+368b^6+789n+1050nb^2+240nb^4)\beta^3s_0^2\\
 &+6
(431+948b^2+528b^4+64b^6)\beta^5s^j_{\,\,\,k}s^k_{\,\,\,j}+24(5+4b^2)
(43+76b^2+16b^4)\beta^5\sigma\\
&+36
(79+88b^2+16b^4)\beta^5s^ks_k;\\
\end{aligned}
\end{equation*}

\begin{equation*}
\begin{aligned}
t_{10}&=4(59+480b^2+870b^4+400b^6+96n+414
nb^2+558nb^4+228nb^6)\beta^2r_{0k}s^k_{\,\,\,0}\\
&+3/4\{n-1+(8n-32)b^2+(12n-24)b^4\}r_{00}^2+(215+404b^2+164b^4)(1+2b^2)
^2\beta^2\overline{Ric}~~~~~~~~~~~~~~~~~~~~~~~~~~~~~~~~~~~~~~~~~~~~~~~~~~~~~~~~~~~~~~~~~~~~~~~~~~~~~~~~~~~~~~~~~~~~~~~~~~~~~~~~~~~~~~~~~~~~~~~~~~~~~~~~~~~~~~~~~~~~~~~~~~~~~~~~\\~~~~~~~~~~~~~~~~~~~~~~~~~~~~~~~~~~~~~~~~~~~~~~~~~~~~~~~~~~~~~~~~~~~~~~~~~~~~~~~~~~~~~~~~~~~~~~~~~~~~~~~~~~~~~~~~~~~~~~~~~~~~~~~~~~~~~~~~~~~~~~~~~~~~~~~~
&- 4(-44-29b^2+64b^4+20n+68nb^2+56nb^4)\beta
r_{00}r_0\\
&-8(55+142b^2+82b^4)\beta^2r_{00}r+2(43
-554b^2-488b^4-13n+44nb^2+104nb^4)\beta r_{00}s_0\\
&+4(1+2b^2)(79+172b^2+82b^4)\beta^2(b^kr_{00|k}+r_
{00}r^k_{\,\,\,k})\\
&+1/2(1+2b^2)(-23+22b^2+64b^4+23n+74nb^2+56nb^4)\beta r_{00|0}\\
&-4(1+2b^2) (79+172b^2+82b^4)\beta^2r_{0|0}
+8(55+142b^2+82b^4)\beta^2r_0^2\\
&-4(-553-
496b^2+158b^4+276n+744nb^2+456nb^4)\beta^2s_0r_0\\
&+8(253+780b^2+678b^4+152b^6)\beta^3(s_0r^k_ {\,\,\,k}+b^ks_{0|k}
-r_{0k}s^k-s^k_ {\,\,\,0}r_k)\\
&+2(-
257-840b^2-834b^4-256b^6+96n+414nb^2+558nb^4+228nb^6)\beta^2s_{0|0}\\
&-4(1+2b^2)(439+ 1146b^2+828b^4+152b^6)\beta^3s^k_{\,\,\,0|k}-32(65+113b^2+38b^4)\beta^3s_0r\\
&+2(107+116b^2+20b^4)(1+2b^2)^2\beta^2s^k_{\,\,\,0}s_{0k}-4 (625+1158b^2+480b^4+32b^6)\beta^4s^ks_k\\
&+4(-383-804b^2-510b^4-112b^6+347n+1128nb^2+10
50nb^4+256nb^6)\beta^3s_ks^k_{\,\,\,0}\\
&-2(-
255+3368b^2+3668b^4+752b^6+375n+600nb^2+150nb^4)\beta^2s_0^2\\
&-(1579+5000b^2+4632b^4+12 80b^6+64b^8)\beta^4s^j_{\,\,\,k}s^k_{\,\,\,j}\\
& - (2641+9344b^2+10176b^4+3584b^6+256b^8)\beta^4\sigma
;\\
\end{aligned}
\end{equation*}

 \begin{equation*}
\begin{aligned}
t_{11}&= -2(1+2b^2)(17+134b^2+128b^4+21n+66nb^2+48nb^4)\beta
s^k_{\,\,\,0}
r_{0k}\\
&+2(1+2b^2)(-5+8b^2+2n+4nb^2) r_{00}r_0
-2(19+20b^2)(1+2b^2)^2\beta (r_{00}r^k_{\,\,\,k}+b^kr_{00|k})\\
& +16 (4+5b^2)(1+2b^2)\beta rr_{00}
-(7-92b^2-104b^4-n+8nb^2+20nb^4)r_{00}s_0\\
&-2(11+10b^2)(1+2b^2)^3\beta\overline{Ric}
-1/2(1+2b^2)^2(-1+4b^2+n+2nb^2)
r_{00|0}\\
&+2(19+20b^2)(1+2b^2)^2\beta r_{0|0} -16(4+5b^2)(1+2b^2)\beta
r_0^2\\
&+4(-83-68b^2+88b^4+36n+120nb^2+96nb^4)\beta r_0s_0
+16(41+98b^2+50b^4)\beta^2rs_0\\
&-(1+2b^2)(-59-98b^2-32b^4+21n+66nb^2+48nb^4)\beta s_{0|0}\\
&+16(1+2b^2)(31+61b^2+25b^4)\beta^2( r_{0k}s^k +s^k_{\,\,\,0}r_k-r^k_{\,\,\,k}s_0-b^ks_{0|k})\\
& +2(175+292b^2+100b^4)(2b^2+1)^2\beta^2s^k_ {\,\,\,0|k}
-16(2+b^2)(1+2b^2)^3\beta s^k_{\,\,\,0}s_{0k}\\
&-4(-80-156b^2-42b^4+8b^6+77n+318nb^2+402nb^4+148nb^6)\beta^2s_ks^k_{\,\,\,0}\\
&+4(-23+392b^2+560b^4+160b^6+
24n+42nb^2+6nb^4)\beta s_0^2\\
&+24(1+2b^2) (25+56b^2+32b^4+4b^6)\beta^3s^j_{\,\,\,k}s^k_{\,\,\,j}
+4(5+4b^2)(1+2b^2)
(43+76b^2+16b^4)\beta^3\sigma\\
&+24(53+144b^2+102b^4+16b^6)\beta^3s^ks_k;\\
\end{aligned}
\end{equation*}

\begin{equation*}
\begin{aligned}
t_{12}&= 2(1+2b^2)^2(1+8b^2+n+2nb^2)r_{0k}s^k_{\,\,\,0}
+(1+2b^2)^4\overline{Ric}
 +2(1+2b^2)
^3(r_{00}r^k_{\,\,\,k}+b^kr_{00|k})\\
& -4(1+2b^2)^2rr_{00} -2 (1+2b^2)^3r_{0|0} +4(1+2b^2)^2r_0^2
-2(1+2b^2)\{4n-11+(8n+14)b^2\} r_0s_0\\
& +4(1+2b^2)^2(17+16b^2)\beta (s_0r^k_{\,\,\,k} +b^ks_{0|k}
-s^kr_{0k} -s^k_{\,\,\,0}r_k)-8(5+4b^2)(1+2b^2)^3\beta s^k_{\,\,\,0|k}\\
& -16(1+2b^2)(7+8b^2) \beta s_0r
+(1+2b^2)^2(-3+n+2nb^2)s_{0|0}\\
& +2(1+2b^2)^4s^k_{\,\,\,0}s_{0k} +2
(1+2b^2)\{19n-19+(58n+14)b^2+(40n+32)b^4\}\beta s^k_{\,\,\,0}s_k\\
&+\{-5n+9-(8n+144)b^2+(4n-264)b^4-96b^6\}s_0^2-8(1+2b^2)(47+80b^2+26b^4)\beta^2s^ks_k\\
&-(139+196b^2+52b^4)(1+2b^2)^2\beta^2s^j_{\,\,\,k}s^k_{\,\,\,j}
-6(29+48b^2+16b^4)(1+2b^2)^2\beta^2\sigma;\\
\end{aligned}
\end{equation*}

\begin{equation*}
\begin{aligned}
t_{13}&=8(1+2b^2)^2rs_0+4(1+2b^2)
^3(s^kr_{0k}+s^k_{\,\,\,0}r_k-s_0r^k_{\,\,\,k}-b^ks_
{0|k})+2(1+2b^2)^4s^k_{\,\,\,0|k}\\~~~~~~~~~~~~~~~~~~~~~~~~~~~~~~~~~~~~~~~~~~~~~~~~~~~~~~~~~~~~~~~~~~~~~~~~~~~~~~~~~~~~~~~~~~~~~~~~~~~~~~~~~~~~~~~~~~~~~~~~~~~~~~~~~~~~~~~~~~~~~~~~~~~~~~~~~~~~~~~~~~~~~~~~~~~~~~~~~~~~~~~~~~~~~~~~~~~~~~~~~~~~~~~~~~~~~~~~~~~~~~~~~~~~~~~~~~~~~~~~~~~~~~~~~~~~~~~~~~~~~~~~~~~~~~~~~~~~~~~~~~~~~~~~~~~~~~~~~~~~~~~~~~~~~~~~~~~~~~~~~~~~~~~~~~~~~~~~~~~~~~~~~~~~~~~~~~~~~~~~~~~~~~~~~~~~~~~~~~~~~~~~~~~~~~~~~~~~~~~~~~~~~~~~~~~~~~~~~~~~~~~~~~~~~~~~~~~~~~~~~~~~~~~~~~~~~~~~~~~~~~~~~~~~~~~~~~~~~~~~~~~~~~~~~~~~~~~~~~~~~~~~~~~~~~~~~~~~~~~~~~~~~~~~~~~~~~~~~~~~~~~~~~~~~~~~~~~~~~~~~~~~~~~~~~~~~~~~~~~~~~~~~~~~~~~~~~~~~~~~~~~~~~~~~~~~~~~~~~~~~~~~~~~~~~~~~~~~~~~~~~~~~~~~~~~~~~~~~~~~~~~~~~~~~~~~~~~~~~~~~~~~~~~~~~~~~~~~~~~~~~~~~~~~~~~~~~~~~~~~~~~~~~~~~~~~~~~~~~~~~~~~~~~~~~~~~~~~~~~~~~~~~~~~~~~~~~~~~~~~~~~~~~~~~~~~~~~~~~~~~~~~~~~~~~~~~~~~~~~~~~~~~~~~~~~~~~~~~~~~~~~~~~~~~~~~~~~~~~~~~~~~~~~~~~~~~~~
&-2(1+2b^2)^2(-1+4b^2+n+2nb^2)s^k_{\,\,\,0}s_k+6(1+2b^2)^3(3+2b^2)\beta
s^j_{\,\,\,k}s^k_{\,\,\,j}+4(5+4b^2)(1+2b^2)^3\beta\sigma\\
&+12(1+2b^2)^2(5+4b^2)\beta
s^ks_k;\\
t_{14}&=
-(1+2b^2)^4s^j_{\,\,\,k}s^k_{\,\,\,j}-(1+2b^2)^4\sigma-4(1+2b^2)^3s^ks_k.
\end{aligned}
\end{equation*}

\vspace{1cm}

\vspace{6mm}

  Yi-Bing Shen

  Center of Math. Science,

  Yuqun Campus, Zhejiang University,

  Hangzhou 310027, China,

  {\it Email: yibingshen$@$zju.edu.cn}

\vskip 5mm

Xiaoling Zhang

 Department of Mathematics,

  Zhejiang University,

  Hangzhou 310027, China,

  College of Mathematics and Systems Science,

 Xinjiang University,

 Urumqi 830046, China,

 {\it Email: xlzhang@ymail.com}

 \vskip 5mm

 \end{document}